\newtheorem{Th}{Theorem}[section]
\newtheorem{Prop}[Th]{Proposition}
\newtheorem{Lem}[Th]{Lemma}
\newtheorem{Cor}[Th]{Corollary}
\newtheorem{Rem}[Th]{Remark}
\newcommand{\vp}{\varphi}
\newcommand{\R}{\mathbb{R}}
\newcommand{\N}{\mathbb{N}}
\newcommand{\cC}{{\mathcal C}}
\newcommand{\cD}{{\mathcal D}}
\newcommand{\cM}{{\mathcal M}}
\newcommand{\cO}{{\mathcal O}}
\newcommand{\cS}{{\mathcal S}}
\newcommand{\tu}{\widetilde{u}}
\numberwithin{equation}{section}
\newcommand{\rn}{\R^N}
\newcommand{\codim}{\mathrm{co}\,\mathrm{dim}\,}
\begin{document}


\title{Normalized solutions to polyharmonic equations with Hardy-type potentials and exponential critical nonlinearities}

\author[B. Bieganowski]{Bartosz Bieganowski}
	\address[B. Bieganowski]{\newline\indent
			Faculty of Mathematics, Informatics and Mechanics, \newline\indent
			University of Warsaw, \newline\indent
			ul. Banacha 2, 02-097 Warsaw, Poland}	
			\email{\href{mailto:bartoszb@mimuw.edu.pl}{bartoszb@mimuw.edu.pl}}	
			
\author[O. H. Miyagaki]{Ol\'impio Hiroshi Miyagaki}
\address[O. H. Miyagaki]{\newline\indent Department of Mathematics
	\newline\indent 
Federal University of S\~ao Carlos-UFSCAR
\newline\indent 
Rod. Washington Luis Km 235, 13565-905 S\~ao Carlos-SP, Brazil}
\email{\href{mailto:olimpio@ufscar.br}{olimpio@ufscar.br}}

\author[J. Schino]{Jacopo Schino}
\address[J. Schino]{\newline\indent
	Faculty of Mathematics, Informatics and Mechanics
	\newline\indent 
	University of Warsaw
	\newline\indent 
	ul. Banacha 2, 02-097 Warsaw, Poland}
\email{\href{mailto:j.schino2@uw.edu.pl}{j.schino2@uw.edu.pl}}		
	
	
	\maketitle
	
	\pagestyle{myheadings} \markboth{\underline{B. Bieganowski, O. H. Miyagaki, J. Schino}}{
		\underline{Normalized solutions to polyharmonic singular equations with exponential critical growth}}

\begin{abstract} 
Via a constrained minimization, we find a solution $(\lambda,u)$ to the problem
\begin{equation*}
\begin{cases}
(-\Delta)^m u+\frac{\mu}{|x|^{2m}}u + \lambda u = \eta u^3 + g(u)\\
\int_{\mathbb{R}^{2m}} u^2 \, dx = \rho
\end{cases}
\end{equation*}
with $1 \le m \in \mathbb{N}$, $\mu,\eta \ge 0$, $\rho > 0$, and $g$ having exponential critical growth at infinity and mass supercritical growth at zero.

\medskip

\noindent \textbf{Keywords:} normalized solutions, nonlinear polyharmonic equations, singular potentials, constrained minimization, exponential critical growth, variational methods.
   
\noindent \textbf{AMS 2020 Subject Classification:} 35J35, 35J75, 35J91, 35Q55, 78M30.
\end{abstract}

\maketitle

\section{Introduction}

In this paper, we search for solutions $(\lambda,u)$ to
\begin{equation}\label{eq}
\begin{cases}
\displaystyle (-\Delta)^m u + \frac{\mu}{|x|^{2m}}u + \lambda u = \eta u^3 + g(u), \quad x \in \R^{2m} \\
\displaystyle \int_{\R^{2m}} u^2 \, dx = \rho,
\end{cases}
\end{equation}
where $1 \le m \in \N$, $\mu,\eta \ge 0$,
$\rho > 0$, and $g \colon \R\to\R$ is a nonlinear term that satisfies suitable assumptions -- cf. (\ref{A0})--(\ref{A4}) below. In particular, the \textit{mass} $\rho$ is prescribed, $\lambda \in \R$ is unknown and will arise as a Lagrange multiplier, and $u \colon \R^{2m} \to \R$ belongs to an appropriate Sobolev space. Solutions to problems like \eqref{eq}, i.e., where the mass is given a priori, are usually referred to as \textit{normalized solutions.}

System \eqref{eq} appears when one studies the Cauchy problem for the time-dependent \textit{polyharmonic} equation
\begin{equation}\label{schroed-t}
\begin{cases}
\displaystyle \mathbf{i} \partial_t \Psi = (-\Delta)^m \Psi + \frac{\mu}{|x|^{2m}}\Psi  - \mathbf{f}(|\Psi|)\Psi, \\
\Psi(0, \cdot) = u \in L^2 (\R^{2m}) \setminus \{ 0\},
\end{cases}
\end{equation}
which was considered in \cite{IK,Turitsyn} with $m=2$ and $\mu = 0$ to study the stability of solitons in magnetic materials once the effective quasiparticle mass becomes infinite. More precisely, if \textit{stationary-wave solutions} to \eqref{schroed-t} are looked for, i.e., $\Psi(x, t) = e^{i\lambda t} u(x)$, then $|\Psi(t,\cdot)| = |u|$ for all $t \in \R$ and one obtains \eqref{eq}. In fact, if $\mathbf{f}$ satisfies opportune hypotheses, then the $L^2$-norm of \textit{any} solution to \eqref{schroed-t} is constant in time; indeed, multiplying by the complex conjugate of $\Psi$, integrating, and taking the imaginary part leads to $\frac{d}{dt} \int_{\R^N} |\Psi(t,x)|^2 \, dx = 0$. This is particularly meaningful from the physical point of view; for instance, when $m=1$, the mass represents the total number of atoms in Bose--Einstein condensation \cite{Malomed} or the total power in nonlinear optics \cite{Buryak}. It is an interesting mathematical matter, then, to investigates the cases of higher values of $m$ or nonzero $\mu$ (or both).

In general, we focus on mass-critical ($\eta > 0$) or mass-supercritical ($\eta = 0$) problems with
\begin{equation}\label{example}
g(s) = p \beta |s|^{p-2}s e^{\alpha_m s^2}
\end{equation}
as a model in mind\footnote{Concerning the constant $p$ in front of $\beta$, cf. assumption (\ref{A2}) below.}, with $\beta > 0$, $p > 4$, and
\begin{equation}\label{alpha_m}
\alpha_m := \frac{2m (2\pi)^{2m}}{\omega_{2m-1}},
\end{equation}
where $\omega_{2m-1}$ is the surface measure of the unit sphere $\mathbb{S}^{2m-1}$

Our approach is variational, so we will find a solution to \eqref{eq} by seeking critical points of a suitable functional. To do so, we must first define the function space
\begin{equation}\label{def:X}
X^m := \left\{ u \in H^{m}(\R^{2m}) \ : \ \mu \int_{\R^{2m}} \frac{u^2}{|x|^{2m}} \, dx < +\infty \right\}
\end{equation}
and its symmetric subspace
\begin{equation*}
X_\textup{rad}^m := \left\{ u \in H^{m}_\textup{rad}(\R^{2m}) \ : \ \mu \int_{\R^{2m}} \frac{u^2}{|x|^{2m}} \, dx < +\infty \right\},
\end{equation*}
where $H^{m}_\textup{rad}(\R^{2m})$ is the subspace of $H^m(\R^{2m})$ consisting of radially symmetric functions. In particular, when $\mu = 0$, $X^m = H^{m}(\R^{2m})$ and $X_\textup{rad}^m = H^{m}_\textup{rad}(\R^{2m})$. Next, we define the functional $J \colon X^m \to \R$ by
\begin{equation}\label{def:J}
J(u) = \frac12 \int_{\R^{2m}} |\nabla^{m} u|^2 + \frac{\mu}{|x|^{2m}} u^2 \, dx - \int_{\R^{2m}} \frac{\eta}{4} u^4 + G(u) \, dx,
\end{equation}
where $G(s) := \int_0^s g(t) dt$ and
\begin{equation}\label{def:nabla-m}
\nabla^m u :=
\begin{cases}
\Delta^{m/2} u & \text{if } m  \text{ is even,}\\
\nabla \Delta^{(m-1)/2} u & \text{if } m \text{ is odd.}
\end{cases}
\end{equation}
Observe that $J \in \cC^1(X^m)$ in view of the norm in $X^m$, see \eqref{eq:norm}, and Lemma \ref{lem:do}.
We also introduce the following sets:
\begin{align*}
\cS & := \left\{ u \in L^2(\R^{2m})\ : \ \int_{\R^{2m}} |u|^2 \, dx = \rho \right\},\\
\cD & := \left\{ u \in L^2(\R^{2m})\ : \ \int_{\R^{2m}} |u|^2 \, dx \leq \rho \right\},
\end{align*}
and, letting $H(s) := g(s)s - 2G(s)$,
\begin{equation}\label{def:cM}
\cM := \left\{ u \in X^m_\textup{rad} \setminus \{0\} \ : \  \int_{\R^{2m}} |\nabla^{m} u|^2 + \frac{\mu}{|x|^{2m}} u^2 \, dx = \int_{\R^{2m}} \frac{\eta}{2} u^4 + H(u) \, dx \right\}.
\end{equation}

Before listing our assumptions, we recall from \cite{BM} the following notation for given functions $f_1,f_2 \colon \R \to \R$. We write $f_1(s)\preceq f_2(s)$ for $s\in\R\setminus\{0\}$ if and only if $f_1 \leq f_2$ and for every $\gamma>0$ there exist $s_1>0$ and $s_2<0$ such that $|s_j|<\gamma$ and $f_1(s_j)<f_2(s_j)$, $j \in \{1,2\}$.

Finally, we state our assumptions about $g$.
\begin{enumerate}[label=(A\arabic{*}),ref=A\arabic{*}] \setcounter{enumi}{-1}
	\item \label{A0} $g$ and $h:=H'$ are continuous, 
	\[
	|g(s)| + |h(s)| = \cO(|s|) \text{ as } s \to 0, \ \text{and} \ \lim_{|s| \to +\infty} (|g(s)| + |h(s)|) / e^{\alpha s^2} = 0 \text{ for all } \alpha > \alpha_m.
	\]
	\item \label{A1} $\lim_{s\to 0} H(s)/s^{4} = 0$.
	\item \label{A2} There exist $\beta > 0$ and $p > 4$ such that $G(s) \ge \beta |s|^{p}$ for all $s \in \R$.
	\item \label{A3} $4 H(s) \preceq  h(s)s$ for $s\in \R\setminus\{0\}$.
	\item \label{A4} There exists $\theta > 4$ such that $0 \le \theta G(s) \le s g(s)$ for all $s \in \R$.
\end{enumerate}
It is readily seen that \eqref{example} satisfies all the assumptions above. Notice that (\ref{A0}) includes the case where $g$ has \textit{exponential critical growth} at infinity (i.e., when the limit is $+\infty$ if $\alpha < \alpha_m$), but does not exclude the \textit{exponential subcritical} one either (that is, when (\ref{A0}) holds with $0$ instead of $\alpha_m$).

What follows is the main outcome of this paper.

\begin{Th}\label{th:main}
Suppose that (\ref{A0})--(\ref{A4}) are satisfied,
\begin{equation}\label{eq:Hstrict}
\eta C_4^4 \rho < 2,
\end{equation}
and
\begin{equation}\label{eq:beta}
\beta > \left(\frac{(\theta - 2)(p - 4)}{(\theta - 4)(p - 2)}\right)^\frac{p-4}{2} \left(1 - \frac{\eta}{2} C_4^4 \rho\right) \frac{1}{(p-2) C_p^p \rho},
\end{equation}
where $C_4$ and $C_p$ are defined in \eqref{eq:GN}. Then, there exist $\lambda>0$ and $u \in \cS \cap \cM$ such that
$$
J(u) = \inf_{\cD \cap \cM} J = \inf_{\cS \cap \cM} J > 0
$$
and $(\lambda,u)$ is a solution  of \eqref{eq}. If, in addition, $m=1$ and $g$ is odd, then $u \ge 0$.
\end{Th}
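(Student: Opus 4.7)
The plan is to obtain $u$ as a minimizer of $J$ on $\cD \cap \cM$. The set $\cM$ from \eqref{def:cM} is the Pohozaev-type manifold associated with the mass-preserving fibering $u_t(x):=t^m u(tx)$; indeed, \eqref{def:cM} encodes the vanishing of $\partial_t J(u_t)|_{t=1}$, so every critical point of $J|_{\cS}$ lies in $\cM$. Working on $\cD$ rather than on $\cS$ is convenient for compactness, while the mass-supercritical character of $G$ (forced by $p>4$ in (\ref{A2})) will push the minimum back onto $\cS$. The radial setting gives the compact embeddings $X^m_{\mathrm{rad}}\hookrightarrow L^q(\R^{2m})$ for every $q\in(2,\infty)$, which will absorb the subcritical part of the nonlinearity.

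\textbf{Boundedness and extraction.} Substituting \eqref{def:cM} into \eqref{def:J} yields the clean representation
\[
J(u)=\frac12\int_{\R^{2m}}\bigl(g(u)u-4G(u)\bigr)\,dx\;\ge\;\frac{\theta-4}{2}\int_{\R^{2m}}G(u)\,dx\;\ge\;0
\]
by (\ref{A4}). On $\cD$, the mass-critical Gagliardo--Nirenberg inequality $\|u\|_4^4\le C_4^4\rho\bigl(\|\nabla^m u\|_2^2+\mu\int u^2/|x|^{2m}\,dx\bigr)$, combined with \eqref{def:cM}, gives
\[
\Bigl(1-\tfrac{\eta}{2}C_4^4\rho\Bigr)\Bigl(\|\nabla^m u\|_2^2+\mu\int_{\R^{2m}}\tfrac{u^2}{|x|^{2m}}\,dx\Bigr)\le\int_{\R^{2m}}H(u)\,dx,
\]
whose left coefficient is positive by \eqref{eq:Hstrict}. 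Chaining these with (\ref{A4}) (which gives $H\le(\theta-2)G$ pointwise, hence $\int H\lesssim J$) shows that $J$ coerces the full $X^m$-norm on $\cD\cap\cM$. A minimizing sequence $(u_n)\subset\cD\cap\cM$ for $c:=\inf_{\cD\cap\cM}J$ is therefore bounded in $X^m_{\mathrm{rad}}$, and up to a subsequence $u_n\weakto u$ in $X^m_{\mathrm{rad}}$, $u_n\to u$ strongly in $L^q(\R^{2m})$ for every $q\in(2,\infty)$, and a.e.

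\textbf{Main obstacle: compactness below the Adams threshold.} Passing to the limit in $\int G(u_n)\,dx$ and $\int g(u_n)u_n\,dx$ is delicate under exponential critical growth. A sharp Adams-type inequality in $H^m(\R^{2m})$ provides uniform integrability of $|g(u_n)|$ and $|G(u_n)|$ only as long as $\alpha\,\limsup_n\|\nabla^m u_n\|_2^2$ stays strictly below a universal constant tied to $\alpha_m$. The role of the quantitative bound \eqref{eq:beta} on $\beta$, combined with (\ref{A2}) and a suitable radial concentrating test function, is precisely to force $c$ strictly below this threshold. A Vitali-type argument then delivers $G(u_n)\to G(u)$ and $g(u_n)u_n\to g(u)u$ in $L^1(\R^{2m})$; the same coercivity as above rules out $u\equiv 0$. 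Hence $u\in\cD\cap\cM$ and $J(u)=c$.

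\textbf{Lagrange multiplier and sign.} A standard fibering argument relying on $p,\theta>4$ excludes $\|u\|_2^2<\rho$, so $u\in\cS\cap\cM$. Assumption (\ref{A3}) makes $\cM$ a natural constraint, so $u$ is critical for $J|_{\cS}$ and satisfies $J'(u)+\lambda u=0$ for some $\lambda\in\R$ (the radial restriction is removed by symmetric criticality), which is precisely \eqref{eq}. Pairing this equation with $u$ and substituting \eqref{def:cM} cancels the quadratic contributions and leaves
\[
\lambda\rho=\int_{\R^{2m}}\Bigl(\tfrac{\eta}{2}u^4+2G(u)\Bigr)\,dx>0,
\]
so $\lambda>0$. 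Finally, if $m=1$ and $g$ is odd, then $|u|\in\cS$ with $J(|u|)\le J(u)$, and after an elementary refibering $|u|$ also lies in $\cM$ at the same energy, hence is a minimizer solving \eqref{eq}; since $\lambda>0$, the strong maximum principle for $-\Delta+\lambda$ gives $|u|>0$, and therefore $u$ is sign-definite.
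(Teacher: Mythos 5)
Your overall architecture --- minimizing $J$ on $\cD\cap\cM$, compactness from radial embeddings plus a Moser--Trudinger-type estimate below the critical threshold, then Lagrange multipliers with $\cM$ acting as a natural constraint via (\ref{A3}) --- matches the paper's. However, the proposal leaves the genuinely hard steps as assertions. The central quantitative point is never carried out: you must prove that any minimizing sequence satisfies $\limsup_n|\nabla^m u_n|_2^2<1$, and this is exactly where \eqref{eq:beta} enters. The paper does this by combining the estimate $\limsup_n[u_n]_\mu^2\le\frac{4}{2-\eta C_4^4\rho}\frac{\theta-2}{\theta-4}\,c_\beta(\rho)$ (Lemma \ref{lem:u_n-small}, from (\ref{A4}) and Gagliardo--Nirenberg) with an explicit upper bound on $c_\beta(\rho)$ (Lemma \ref{lem:c-small}, obtained from (\ref{A2}) and Gagliardo--Nirenberg applied to the minimizing sequence itself, not to a concentrating test function), and then checking that \eqref{eq:beta} makes the product strictly less than $1$. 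Saying that ``the role of \eqref{eq:beta} is precisely to force $c$ below this threshold'' without producing these two estimates is a gap, not a proof; note that the constant $\frac{\theta-2}{\theta-4}$ must come out exactly right for \eqref{eq:beta} to suffice. Relatedly, your coercivity chain invokes ``$H\le(\theta-2)G$ from (\ref{A4})'', which is backwards: (\ref{A4}) gives $sg(s)\ge\theta G(s)$, hence $H=sg-2G\ge(\theta-2)G$; the upper bound you actually need is $\int H\,dx=2J+4\int G\,dx\lesssim J$ on $\cM$.

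Two further steps are skipped. First, weak convergence only gives $[u]_\mu^2\le\lim_n[u_n]_\mu^2=\int_{\R^{2m}}\frac{\eta}{2}u^4+H(u)\,dx$, so the limit $u$ need not lie on $\cM$ a priori; one must rescale by $r=r(u)\ge1$ as in \eqref{def:r}, note $u(r\cdot)\in\cD\cap\cM$, and use $J\bigl(u(r\cdot)\bigr)=\frac{1}{2r^{2m}}\int_{\R^{2m}} H(u)-2G(u)\,dx$ together with Fatou's lemma to force $r=1$. (Also, $u\ne0$ follows from Corollary \ref{cor:bdda}, i.e.\ a uniform positive lower bound for $|\nabla^m\cdot|_2$ on $\cD\cap\cM$, not from ``coercivity''.) Second, your claim that ``a standard fibering argument excludes $|u|_2^2<\rho$'' has no mechanism behind it: the fibering $t^mu(t\cdot)$ preserves the mass, so it cannot move $u$ off the interior of $\cD$. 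The paper instead extracts two multipliers $(\nu,\lambda)$ with $\lambda\ge0$, shows $\nu=0$ using the Nehari and Poho\v{z}aev identities of the perturbed equation together with (\ref{A3}) (excluding $\nu=-1$ via (\ref{A3})--(\ref{A4})), and then proves $\lambda>0$, which is what forces $u\in\cS$. Your identity $\lambda\rho=\int_{\R^{2m}}\frac{\eta}{2}u^4+2G(u)\,dx>0$ is a correct and even slightly more direct route to $\lambda>0$ than the paper's Poho\v{z}aev argument, but it is only available once the multiplier structure has been established.
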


\begin{Rem}
(i) When $m=1$, a standard argument proves that the function $u$ given by Theorem \ref{th:main} is continuous in $\R^{2m} \setminus \{0\}$ (in $\R^{2m}$ if $\mu = 0$). Then, if $g$ is odd, the maximum principle yields that $u(x) > 0$ for all $x \ne 0$ (all $x \in \R^{2m}$ if $\mu = 0$).\\
(ii) Concerning the conditions introduced in Theorem \ref{th:main}, \eqref{eq:Hstrict} is used in several steps of the proof to ensure that $\eta |u|_4^4$ is suitably controlled by $|\nabla^m u|_2^2$ if $u \in \cD$, while \eqref{eq:beta} is used in Lemma \ref{lem:c_0attained} to make sure the compactness result in Lemma \ref{lem:cpt} can be applied.
\end{Rem}

Let us recall some relevant literature. Since a lot of work has been carried out about normalized solutions, we limit ourselves to the contexts most similar to \eqref{eq}, that is, euclidean setting and at least one of these aspects: homogeneous polyharmonic operators, potentials, and exponential critical growth.

In \cite{LZ,Ma_Chang}, the authors find a normalized solution to an \textit{autonomous} and \textit{biharmonic} (i.e., $m=2$) equation where the nonlinear term has Sobolev-critical growth at infinity. In \cite{Phan}, instead, the author obtains existence, nonexistence, and blow-up results for a purely mass-critical biharmonic equation with a potential that is either nonnegative and coercive or with its negative part lying in a suited sum of Lebesgue spaces. Moving on, biharmonic equations with a nonautonomous nonlinearity are investigated in \cite{Wang3,ZLG}, where multiple solutions are found in a mass-subcritical regime, and \cite{LuZhang}, where a mountain-pass solution is obtained in a mass-supercritical and Sobolev subcritical framework. Concerning higher values of $m$, to the best of our knowledge, the only pieces of work that debate normalized solutions (to autonomous or nonautonomous equations) are \cite{ScSm}, about one-dimensional problems, and \cite{BMS}, where a solution is obtained as a constrained minimizer in dimension $N \ge 2m$, and nonexistence results are proved as well; additionally, \cite{BMS} contains a Poho\v{z}aev-type result for solutions to polyharmonic equations with Hardy-like potentials that is crucial for our approach.

The literature is richer when concerning Schr\"odinger equations (i.e., $m=1$) with potentials; however, the assumptions about them are usually not satisfied by $\mu |x|^{-2}$, $\mu > 0$. We mention \cite{GHLY,IM,PPVV,ZZ} for (at least) continuous potentials $V \colon \R^{2m} \to \R$, \cite{Noris,yLZ} for coercive ones, \cite{AJ1,TZZZ} for locally essentially bounded potentials with nonempty interior of the pre-image of $0$, and \cite{AJ1,BartschMolle,Ding,IY,yLZ,Molle} for contexts where the functional $u \mapsto \int_{\R^N} V(x) u^2 \, dx$ is well defined in $H^1(\R^N)$; in addition, smallness assumptions about $V$ are often required. Except for the already mentioned \cite{BMS}, the only article dealing with a Hardy potential (for $m=1$) seems to be \cite{hLwZ} (we also mention \cite{LW}, where, in dimension $3$, the vector $x$ is replaced with its first two components).

Finally, we recall the past work about normalized solutions to autonomous Schr\"odinger equations with exponential critical nonlinearities. In \cite{AJM1}, the authors consider a nonlinear term with mass-supercritical growth at the origin and find a solution under the assumption, among others, that the mass is less than $1$.
This restriction on the mass is waived in \cite{CLY}, where the nonlinearity has a mass-supercritical growth at the origin, in \cite{LiZhang,MW}, where such a growth is allowed to be mass-critical, and in \cite{CT}, where the right-hand side of the equation consists of a superlinear power plus an explicit exponential-type function. We also mention \cite{CRTY}, where the nonlinear term also depends on $x$ but there is no potential in the equation.

To find a solution to \eqref{eq} with the lowest possible energy among radial solutions, we adopt a strategy introduced in \cite{BM}, refined in \cite{BMS,MS}, and adapted to different scenarios in \cite{CLY,yLZ,LRZ}. Briefly speaking, since every nontrivial solution to \eqref{eq} belongs to $\cM$ (see Section \ref{sec:setting}), we minimize the functional $J$ over the set $\cD \cap \cM$. In this process, the various parameters in (\ref{A0})--(\ref{A4}) play an important role, see Section \ref{sec:proofs}.

In the next sections, the symbol $\lesssim$ stands for the inequality $\le$ up to a multiplicative positive constant whose precise value is irrelevant, and $t'$ is the H\"older conjugate of $t \in [1,+\infty]$.

\section{Preliminaries}\label{sec:setting}

We endow the space $H^m (\R^{2m})$ with the norm
$$
\| u\|_{H^m}^2 := |\nabla^m u|_2^2 + |u|_2^2,
$$
equivalent to the standard one, where $|\cdot|_k$ stands for the usual $L^k$-norm and $\nabla^m$ is given by \eqref{def:nabla-m}. Let us recall the following Moser--Trudinger-like inequality (see \cite[Theorem 1.1]{Ruf} and \cite[Theorem 1.7]{LamLu}):
\begin{equation*}
\sup \left\{\int_{\R^{2m}} \left(e^{\alpha_m u^2} - 1\right) \, dx : \|u\|_{H^{m}} \le 1\right\} < +\infty,
\end{equation*}
where $\alpha_m$ is defined in \eqref{alpha_m}. The property below holds as well.

\begin{Lem}\label{lem:prel}
There holds
\[
\sup_{\substack{u \in H^m(\R^{2m})\\ |\nabla^m u|_2^2 \le 1}} \frac{1}{|u|_2^{2}} \int_{\R^{2m}} \left(e^{\alpha u^2} - 1\right) \, dx
\begin{cases}
< +\infty \quad \text{if } \alpha < \alpha_m\\
= +\infty \quad \text{if } \alpha \ge \alpha_m.
\end{cases}
\]
\end{Lem}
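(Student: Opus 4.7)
The plan is to split the two regimes; for the finite case I would adapt the Adachi--Tanaka dilation trick coupled with the Moser--Trudinger inequality just recalled, while for the infinite case I would simply exhibit a Moser-type blow-up sequence.

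For $\alpha<\alpha_m$, fix $u\in H^m(\R^{2m})$ with $|\nabla^m u|_2\le 1$ and $|u|_2>0$ (the null case is vacuous), and set $u_\lambda(x):=u(\lambda x)$. In dimension $2m$ the top-order seminorm is dilation invariant, $|\nabla^m u_\lambda|_2=|\nabla^m u|_2\le 1$, whereas $|u_\lambda|_2^2=\lambda^{-2m}|u|_2^2$ can be made arbitrarily small. Choosing
\[
\lambda^{2m}=\frac{|u|_2^2}{\alpha_m/\alpha-|\nabla^m u|_2^2}
\]
forces $\|u_\lambda\|_{H^m}^2=\alpha_m/\alpha$, so $v:=u_\lambda\sqrt{\alpha/\alpha_m}$ lies in the unit ball of $H^m$. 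The Moser--Trudinger inequality recalled just above then gives $\int_{\R^{2m}}(e^{\alpha_m v^2}-1)\,dx\le C_0$, which is exactly $\int_{\R^{2m}}(e^{\alpha u_\lambda^2}-1)\,dx\le C_0$. Undoing the dilation yields
\[
\int_{\R^{2m}}(e^{\alpha u^2}-1)\,dy\le C_0\lambda^{2m}\le \frac{C_0\,\alpha}{\alpha_m-\alpha}\,|u|_2^2,
\]
which is the claim with explicit constant $C=C_0\alpha/(\alpha_m-\alpha)$.

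For $\alpha\ge\alpha_m$ I would test against a Moser-type sequence. In the model case $m=1$ take the classical truncated logarithm
\[
M_n(x):=(\omega_{2m-1})^{-1/2}\begin{cases}(\log n)^{1/2}&\text{if }|x|\le 1/n,\\(\log n)^{-1/2}\log(1/|x|)&\text{if }1/n<|x|\le 1,\\0&\text{if }|x|>1,\end{cases}
\]
which satisfies $|\nabla M_n|_2=1$, $|M_n|_2^2=o(1)$, and $\int_{|x|\le 1/n}(e^{\alpha_m M_n^2}-1)\,dx$ is bounded below by a positive constant (the inner-ball contribution is $\omega_{2m-1}(2m)^{-1}n^{-2m}(n^{\alpha/\alpha_m}-1)$). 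For $m\ge 2$ I would substitute the polyharmonic analogue: a radial function agreeing with a polynomial in $\log|x|$ of degree $m-1$ on $\{|x|\le 1/n\}$, glued in $C^{m-1}$ to a rescaled logarithm on the annulus, and extended by zero outside the unit ball, so that the same three properties persist. In either case the ratio in the statement diverges.

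The main obstacle I anticipate is the polyharmonic Moser sequence: one must write it down so as to lie in $H^m(\R^{2m})$, verify the exact normalization $|\nabla^m M_n|_2=1$ (which amounts to combinatorics on iterated Laplacians of logarithmic profiles), and check the lower bound on the exponential integral on $\{|x|\le 1/n\}$. Once this is in place the blow-up is immediate; by contrast the dilation argument in the finite case is essentially a one-line consequence of the Moser--Trudinger inequality.
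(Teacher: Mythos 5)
The paper gives no proof of this lemma at all --- it simply cites \cite[Theorem 1.7]{CLZ} --- so your self-contained argument is a genuinely different route, and the half you actually complete is the half the paper uses. Your treatment of $\alpha<\alpha_m$ is correct: in dimension $2m$ the dilation $u_\lambda=u(\lambda\,\cdot)$ preserves $|\nabla^m u|_2$ and scales $|u|_2^2$ by $\lambda^{-2m}$, your choice of $\lambda$ makes $\|u_\lambda\|_{H^m}^2=\alpha_m/\alpha$ so that $v=\sqrt{\alpha/\alpha_m}\,u_\lambda$ lies in the unit ball, and the Ruf/Lam--Lu inequality recalled just before the lemma gives $\int_{\R^{2m}}(e^{\alpha u^2}-1)\,dx\le C_0\lambda^{2m}\le C_0\alpha(\alpha_m-\alpha)^{-1}|u|_2^2$. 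This Adachi--Tanaka scaling argument is clean, and since the paper only ever invokes the lemma in the finiteness direction (Lemmas \ref{lem0} and \ref{lem:cpt} use it with $\alpha t<\alpha_m$), it is fully adequate for the application and arguably more transparent than an external citation.

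As a proof of the lemma \emph{as stated}, however, the case $\alpha\ge\alpha_m$ with $m\ge 2$ has a genuine gap: you describe the polyharmonic concentrating sequence but do not construct it. Producing a radial $M_n\in H^m(\R^{2m})$ with $|\nabla^m M_n|_2\le 1$, $|M_n|_2\to 0$, and $M_n^2\ge(2m/\alpha_m)\log n$ on $\{|x|\le 1/n\}$ is precisely the sharpness part of Adams' inequality; iterated Laplacians of $\log(1/|x|)$ generate negative powers of $|x|$, so the $C^{m-1}$ gluing, the membership in $H^m$, and the exact normalization of the $m$-th order seminorm all require real work --- this is the content of the result the paper cites, and ``the same three properties persist'' cannot stand in for it. There is also a small slip in your $m=1$ computation: with your $M_n$, on the inner ball $e^{\alpha M_n^2}=n^{2m\alpha/\alpha_m}$, not $n^{\alpha/\alpha_m}$; as written, your parenthetical lower bound $\omega_{2m-1}(2m)^{-1}n^{-2m}(n^{\alpha/\alpha_m}-1)$ tends to $0$ for $\alpha=\alpha_m$, whereas with the corrected exponent it tends to $\omega_{2m-1}/(2m)>0$ and the blow-up of the ratio $|M_n|_2^{-2}\int(e^{\alpha M_n^2}-1)\,dx$ follows as you intend, including at the endpoint $\alpha=\alpha_m$.
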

\begin{proof} For the proof see \cite[Theorem 1.7]{CLZ}.
\end{proof}

We also present a generalization of \cite[Lemma 1]{do} with $N=2$, which can be proved in a similar way to \cite[Proofs of Theorems 1.4 and 1.1]{LamLu} (see also \cite[Proof of Proposition 2.5]{BMS}).

\begin{Lem}\label{lem:do}
For every $\alpha > 0$ and $u \in H^m(\R^{2m})$, $e^{\alpha u^2}-1 \in L^1(\R^{2m})$.
\end{Lem}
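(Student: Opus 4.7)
The plan is to adapt do \'O's argument \cite[Lemma 1]{do} from $H^1(\R^2)$ to $H^m(\R^{2m})$ by combining density of smooth compactly supported functions with the Moser--Trudinger-type inequality recalled before Lemma \ref{lem:prel}. Fix $\alpha>0$ and $u\in H^m(\R^{2m})$.

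First, I would pick a small $\eps>0$ and use the density of $C_c^\infty(\R^{2m})$ in $H^m(\R^{2m})$ to select $\phi\in C_c^\infty(\R^{2m})$ with
\[
\alpha(1+\eps)\,\|u-\phi\|_{H^m}^2 \le \alpha_m.
\]
Next, the elementary bound $u^2=((u-\phi)+\phi)^2\le (1+\eps)(u-\phi)^2+(1+\eps^{-1})\phi^2$, together with the identity $e^{A+B}-1=(e^A-1)e^B+(e^B-1)$, gives
\[
e^{\alpha u^2}-1 \;\le\; \bigl(e^{\alpha(1+\eps)(u-\phi)^2}-1\bigr)\,e^{\alpha(1+\eps^{-1})\phi^2} \;+\; \bigl(e^{\alpha(1+\eps^{-1})\phi^2}-1\bigr).
\]
Since $\phi\in C_c^\infty(\R^{2m})$ is bounded, the factor $e^{\alpha(1+\eps^{-1})\phi^2}$ is dominated by a constant, while $e^{\alpha(1+\eps^{-1})\phi^2}-1$ vanishes outside the compact set $\supp\phi$ and is therefore integrable.

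To deal with the remaining term, set $w:=u-\phi$ and $\delta:=\|w\|_{H^m}$; the case $\delta=0$ being trivial, assume $\delta>0$ and let $\tilde w:=w/\delta$, so that $\|\tilde w\|_{H^m}=1$. The choice of $\phi$ ensures $\alpha(1+\eps)\delta^2\le\alpha_m$, hence
\[
e^{\alpha(1+\eps)w^2}-1 \;\le\; e^{\alpha_m\tilde w^2}-1,
\]
and the Moser--Trudinger inequality recalled in this section furnishes a uniform $L^1$-bound on the right-hand side. Combining this with the bound on the compactly supported piece yields the claim.

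The only (mild) delicate point is the bookkeeping of constants: $\eps$ has to be chosen first so that, after approximating $u$ by $\phi$ and normalizing, the exponent $\alpha(1+\eps)\delta^2$ remains below the critical threshold $\alpha_m$ for which the Moser--Trudinger inequality provides a finite bound; once this is arranged, every other step is a direct computation.
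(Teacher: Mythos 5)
Your argument is correct, and it is precisely the do \'O-type density argument that the paper delegates to its references ([do, Lemma 1], [LamLu], [BMS]) rather than writing out: approximate $u$ in $H^m(\R^{2m})$ by $\phi\in C_c^\infty$, split the exponent via Young's inequality, control the compactly supported piece trivially, and bring the remainder below the threshold $\alpha_m$ so that the Moser--Trudinger inequality recalled before Lemma \ref{lem:prel} applies. All the estimates and the bookkeeping of $\eps$, $\delta$, and the normalization $\tilde w=w/\delta$ check out.
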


We introduce the norm
\begin{equation}\label{eq:norm}
\|u\|_\mu^2 := \int_{\R^{2m}} |\nabla^{m} u|^2 \, dx + \int_{\R^{2m}} u^2 \, dx + \mu\int_{\R^{2m}} \frac{u^2}{|x|^{2m}} \, dx
\end{equation}
in the space $X^m$, defined in \eqref{def:X}. Additionally, to simplify the notations, we denote
$$
[u]_\mu^2 := \int_{\R^N} |\nabla^{m} u|^2 + \frac{\mu}{|x|^{2m}} u^2 \, dx.
$$

Let us recall the following result from \cite[Proposition 2.5]{BMS}.

\begin{Prop}[Poho\v{z}aev identity]\label{poh:local}
Let $f\colon \R \rightarrow \R$ be a continuous function satisfying
\begin{equation}\label{eq:local}
\hbox{for all }q \ge 2\hbox{ and }\alpha> \alpha_m,\hbox{ there holds }\displaystyle |f(s)| \lesssim |s| + |s|^{q-1}(e^{\alpha s^2}-1).
\end{equation}
Let $u \in X^m_\textup{rad}$ be a weak solution to 
$$
(-\Delta)^m u + \frac{\mu}{|x|^{2m}} u = f(u).
$$
Then
\begin{equation*}
\int_{\R^{2m}} F(u) \, dx = 0,
\end{equation*}
where $F(s) := \int_0^s f(t) \, dt$.
\end{Prop}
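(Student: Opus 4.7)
The plan is to exploit the critical dimension $N = 2m$ via a scaling argument. For each $t>0$, introduce the one-parameter family $v_t(x) := u(x/t)$, which lies in $X^m_\textup{rad}$ because radiality is preserved, and observe the identities
\begin{align*}
[v_t]_\mu^2 &= t^{2m-2m}\,[u]_\mu^2 = [u]_\mu^2,\\
\int_{\R^{2m}} F(v_t)\,dx &= t^{2m} \int_{\R^{2m}} F(u)\,dx,
\end{align*}
obtained by the change of variable $y=x/t$. The first follows from the perfect balance between the derivative rescaling $\nabla^m(u(\cdot/t)) = t^{-m}(\nabla^m u)(\cdot/t)$, the conformal scaling of $|x|^{-2m}$, and the volume element $dx = t^{2m}\,dy$; the second is simply homogeneity of degree $2m$ in the dilation parameter. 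Thus the quadratic part of the energy is rigid under this family, while the nonlinear part scales like $t^{2m}$.

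Next, I would consider the energy functional $I(v) := \tfrac12 [v]_\mu^2 - \int_{\R^{2m}} F(v)\,dx$ on $X^m$, which is well defined and $C^1$ under \eqref{eq:local} thanks to Lemmas \ref{lem:prel} and \ref{lem:do}, and whose critical points are exactly the weak solutions of the PDE. Because $u$ is such a critical point, the scalar map $t \mapsto I(v_t)$ must have a critical point at $t=1$, any smooth variation through $u$ annihilating $I'(u)$. Plugging the scaling identities into the definition of $I$ yields
$$
I(v_t) = \tfrac{1}{2}[u]_\mu^2 - t^{2m} \int_{\R^{2m}} F(u)\,dx,
$$
and differentiating at $t=1$ gives $-2m \int_{\R^{2m}} F(u)\,dx = 0$, which is exactly the desired identity.

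The main obstacle is the regularity required to equate $\frac{d}{dt}|_{t=1} I(v_t)$ with $\langle I'(u), \dot v_1 \rangle$: formally $\dot v_1 = -x\cdot\nabla u$, and one must check that this is an admissible direction in $X^m$ and that $t \mapsto v_t$ is $C^1$ into $X^m$ at $t=1$. I would handle this by an approximation procedure: replace $u$ by $u_n := \chi_n u$, where $\chi_n$ is a smooth radial cutoff vanishing near $0$ and outside a large ball, apply the scaling identity to $u_n$ (for which $x\cdot\nabla u_n$ is a perfectly admissible test function), and pass to the limit $n \to \infty$. The passage relies on dominated convergence $\int F(u_n)\,dx \to \int F(u)\,dx$ justified by \eqref{eq:local} and Lemma \ref{lem:do}, together with $u_n \to u$ in $X^m$ via standard elliptic regularity for polyharmonic equations with Hardy potentials on $X^m_\textup{rad}$. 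An equally valid route would be to imitate the classical Pohozaev computation by multiplying the PDE directly by $x\cdot\nabla u$ and integrating by parts $m$ times; in dimension $N=2m$ the homogeneity causes every contribution coming from $[u]_\mu^2$ to cancel, leaving $2m\int_{\R^{2m}} F(u)\,dx = 0$.
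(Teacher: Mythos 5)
The first thing to say is that the paper does not prove Proposition \ref{poh:local} at all: it is imported verbatim from \cite[Proposition 2.5]{BMS}, so the only possible comparison is with that external argument. Your scaling computation is correct and is exactly the standard heuristic behind the identity: in the critical dimension $N=2m$ the dilation $v_t=u(\cdot/t)$ leaves $[\,\cdot\,]_\mu^2$ invariant while $\int_{\R^{2m}}F(v_t)\,dx=t^{2m}\int_{\R^{2m}}F(u)\,dx$, so \emph{if} one may write $\frac{d}{dt}I(v_t)\big|_{t=1}=I'(u)\bigl[\partial_t v_t|_{t=1}\bigr]$, the identity $2m\int_{\R^{2m}}F(u)\,dx=0$ drops out. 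But that chain-rule step is the entire content of the proposition, and your justification does not close it. The map $t\mapsto v_t$ is differentiable \emph{into $X^m$} at $t=1$ only if $x\cdot\nabla u\in X^m$, which requires $u\in H^{m+1}$ together with decay and Hardy-integrability of $x\cdot\nabla u$ --- none of which is among the hypotheses. Mere continuity of the curve plus smoothness of the explicit function $t\mapsto \frac12[u]_\mu^2-t^{2m}\int F(u)\,dx$ is not enough: a continuous curve through a critical point of a $\cC^1$ functional can perfectly well have $\|v_t-u\|\sim|t-1|^{1/2}$, in which case $I(v_t)-I(u)=o(\|v_t-u\|)$ is compatible with a nonzero $t$-derivative at $t=1$, so one cannot conclude that the derivative vanishes.

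Your proposed patch $u_n:=\chi_n u$ does not repair this, for two reasons. First, multiplying by a cutoff does not raise Sobolev regularity: $x\cdot\nabla u_n$ still lies only in $H^{m-1}$, hence is not an admissible test function, and the curve $t\mapsto u_n(\cdot/t)$ is no more differentiable into $X^m$ than the original one; the missing ingredient is local elliptic regularity ($u\in H^{2m}_{\mathrm{loc}}$ away from the origin, plus quantitative control near $0$ where the Hardy potential is singular), which you invoke only for the unrelated purpose of showing $u_n\to u$. Second, $u_n$ is not a solution of the equation, so there is no ``scaling identity for $u_n$'' to pass to the limit in; what one can do is test the equation satisfied by $u$ against $x\cdot\nabla(\chi_n u)$, but this produces commutator terms in which up to $m$ derivatives fall on $\chi_n$, concentrated near the origin and near infinity, and estimating precisely those error terms (together with the boundary terms in the $m$-fold integration by parts and the contribution of $\mu|x|^{-2m}u$ near $x=0$) is the technical heart of the proof in \cite{BMS}. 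Your alternative one-line route --- multiply by $x\cdot\nabla u$ and integrate by parts $m$ times --- has the same omissions: it presupposes the regularity just mentioned and discards all boundary terms without argument. In short, the formal mechanism is right, but the proof as written assumes what has to be proved.
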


Now observe that 
from Proposition \ref{poh:local} we know that every solution $u\in X_\textup{rad}^m$ to \eqref{eq} satisfies the following Poho\v{z}aev identity
$$
\int_{\R^{2m}} \frac{\eta}{4} u^4 + G(u) - \frac{\lambda}{2} u^2 \, dx = 0
$$
provided that $g$ is continuous and satisfies \eqref{eq:local}.
Moreover, every solution to \eqref{eq} also satisfies the Nehari identity
$$
\int_{\R^{2m}} |\nabla^{m} u|^2 \, dx + \int_{\R^{2m}} \frac{\mu}{|y|^{2m}} u^2 \, dx+ \lambda \int_{\R^{2m}} u^2 \, dx - \int_{\R^{2m}} \eta u^4 + g(u)u \, dx = 0.
$$
In particular, every nontrivial solution in $X_\textup{rad}^m$ satisfies $M(u) = 0$, where
\begin{equation}\label{def:M}
M(u) := \int_{\R^{2m}} |\nabla^{m} u|^2 \, dx + \int_{\R^{2m}} \frac{\mu}{|y|^{2m}} u^2 \, dx - \int_{\R^{2m}} \frac{\eta}{2} u^4 + H(u) \, dx,
\end{equation}
i.e., it belongs to $\cM$ -- cf \eqref{def:cM}. Notice that $\cM$ is a $\cC^1$-manifold in $X_\textup{rad}^m$ with $\codim \cM = 1$ under (\ref{A0})--(\ref{A4}), see Lemma \ref{lem:manifold} below.

We recall the Gagliardo--Nirenberg inequality \cite[p. 125]{Nirenberg}, i.e., for every $p \in (2,+\infty)$ there exist $C_{p}>0$ such that
\begin{equation}\label{eq:GN}
|u|_p \leq C_{p}  |\nabla^{m} u|^{1-2/p}_2  |u|_2^{2/p} \quad \text{for every } u \in H^{m}(\R^{2m}),
\end{equation}
and $C_{p}$ is optimal.

Finally, we also recall the following integral characterization of $\preceq$ (cf. \cite[Lemma 2.1]{BM} and \cite[Lemma 3.1]{BMS}).
\begin{Lem}
Suppose that $f_1, f_2 \colon \R\to\R$ are continuous and satisfy
\[
f_j(s) = \cO(s^2) \text{ as } s \to 0 \quad \text{and} \quad \lim_{|s| \to +\infty} f_j(s) / e^{\alpha s^2} = 0 \text{ for all } \alpha > \alpha_m
\]
($j \in \{1,2\}$). Then, $f_1(s) \preceq f_2(s)$ for $s \in \R \setminus \{0\}$ if and only if $f_1 \le f_2$ and
$$
\int_{\R^{2m}} f_1(u) \, dx < \int_{\R^{2m}} f_2(u) \, dx
$$
for every $u \in H^m (\R^{2m})$.
\end{Lem}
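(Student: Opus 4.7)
The plan is to prove the two implications separately. For $(\Leftarrow)$, I argue by contraposition: assume $f_1 \le f_2$ but $f_1 \not\preceq f_2$. Negating the definition of $\preceq$, there exists $\gamma > 0$ such that one of the intervals $(0,\gamma)$ or $(-\gamma,0)$ contains no $s$ with $f_1(s) < f_2(s)$; say $f_1 \equiv f_2$ on $(0,\gamma)$. Picking a smooth, nontrivial, compactly supported $u$ with $0 \le u < \gamma$ then yields $\int_{\R^{2m}} f_1(u)\,dx = \int_{\R^{2m}} f_2(u)\,dx$, contradicting the strict integral inequality (which must be read for nontrivial $u$, since both sides vanish otherwise).

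The direction $(\Rightarrow)$ is the substantive one. The pointwise inequality $f_1 \le f_2$ is part of the definition of $\preceq$. For the strict integral inequality, fix a nontrivial $u \in H^m(\R^{2m})$; Lemma \ref{lem:do} together with the growth hypotheses guarantee $f_j(u) \in L^1(\R^{2m})$. Since $f_1 \le f_2$ pointwise, the inequality can fail to be strict only if $u(x) \in D := \{s \in \R : f_1(s) = f_2(s)\}$ almost everywhere, and I would derive a contradiction by showing that this forces $u \equiv 0$.

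The heart of the argument is a truncation/rigidity scheme. By $\preceq$ and continuity of $f_2 - f_1$, the open set $E := \R \setminus D$ accumulates at $0$ from both sides, so one can choose open intervals $(a_n,b_n) \subset E$ with $0 < a_n < b_n$ and $b_n \to 0^+$, together with analogous intervals accumulating at $0$ from below. Fix one such $(a,b)$ on the positive side and define the Lipschitz map $T \colon \R \to \R$ by $T = 0$ on $(-\infty,a]$, $T = 1$ on $[b,+\infty)$, and $T$ affine on $[a,b]$. Since $u \in H^m(\R^{2m}) \subset H^1(\R^{2m})$ and $T(0) = 0$, the Lipschitz chain rule gives $T(u) \in H^1(\R^{2m})$. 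Because $u$ essentially avoids the gap $(a,b) \subset E$, we have $T(u) = \chi_{\{u \ge b\}}$ almost everywhere, and $\{u \ge b\}$ has finite measure since $b > 0$ and $u \in L^2$. A Stampacchia-type argument then gives $\nabla \chi_{\{u \ge b\}} = 0$ a.e., so $\chi_{\{u \ge b\}}$ is constant on the connected space $\R^{2m}$ and hence identically zero. Varying $b = b_n \to 0^+$ and using countable subadditivity yields $|\{u > 0\}| = 0$; the symmetric argument on the negative intervals gives $|\{u < 0\}| = 0$, whence $u \equiv 0$, the desired contradiction.

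The main obstacle is this truncation/rigidity step, in particular leveraging the embedding $H^m(\R^{2m}) \subset H^1(\R^{2m})$ so that $T(u)$ has the regularity needed for the Stampacchia argument, and observing that the $\preceq$ hypothesis supplies the requisite gaps in $E$ accumulating at $0$ from both sides.
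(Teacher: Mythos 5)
Your argument is correct; note that the paper itself offers no proof of this lemma, only the citations \cite[Lemma 2.1]{BM} and \cite[Lemma 3.1]{BMS}. The proof given there rests on exactly the ingredient you establish by hand — that a nontrivial $u\in H^1(\R^{2m})$ cannot essentially skip over a gap $(a,b)$ with $0<a<b$ in its range (connectedness of the essential range, proved via the same Lipschitz truncation and the vanishing of $\nabla u$ on level sets) — so your proposal is essentially the same approach, with the remaining steps (integrability via Lemma~\ref{lem:do}, the contrapositive for the converse using $f_1=f_2$ near $0$ on one side and $f_j(0)=0$) handled correctly.
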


\section{The proof}\label{sec:proofs}

In this section, we will consider the following assumption:
\begin{equation}\label{eq:Heta}
\text{There exists } s \ne 0 \text{ such that } \frac{\eta}{2} s^4 + H(s) > 0.
\end{equation}
If $\eta > 0$, this follows, e.g., from (\ref{A4}); if $\eta = 0$, this follows, e.g., from (\ref{A2}) and (\ref{A4}), or (\ref{A3}) and (\ref{A4}). In these cases, we actually have that $\frac{\eta}{2} s^4 + H(s) > 0$ for every $s \ne 0$.

First, we will show that $\cM$ is a differentiable manifold.
\begin{Lem}\label{lem:manifold}
If (\ref{A0}) and \eqref{eq:Heta} hold, then $\cM$ is a $\cC^1$-manifold in $X_\textup{rad}^m$ of codimension 1.
\end{Lem}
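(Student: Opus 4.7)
The plan is to realize $\cM$ as a regular level set of the $\cC^1$ functional $M \colon X_\textup{rad}^m \to \R$ defined in \eqref{def:M}. By construction $\cM = M^{-1}(0) \cap (X_\textup{rad}^m \setminus \{0\})$, and $X_\textup{rad}^m \setminus \{0\}$ is open in $X_\textup{rad}^m$, so it suffices to verify that $M \in \cC^1(X_\textup{rad}^m)$ and that $M'(u) \ne 0$ for every $u \in \cM$; the submersion theorem then delivers the claim.

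For the $\cC^1$-regularity, the quadratic form $u \mapsto [u]_\mu^2$ is smooth on $X_\textup{rad}^m$ since it is induced by a continuous inner product, and $u \mapsto (\eta/2)\int_{\R^{2m}} u^4 \, dx$ is smooth via the continuous embedding $H^m(\R^{2m}) \hookrightarrow L^4(\R^{2m})$ available in the critical dimension. For the nonlinear term $u \mapsto \int_{\R^{2m}} H(u) \, dx$, I would use (\ref{A0}) to derive pointwise bounds of the form $|H(s)| + |h(s)s| \lesssim s^2 + |s|^q (e^{\alpha s^2} - 1)$ for every $q \ge 2$ and $\alpha > \alpha_m$; combined with the embedding $H^m(\R^{2m}) \hookrightarrow L^q(\R^{2m})$, with Lemma~\ref{lem:do}, and with H\"older's inequality, this gives well-definedness, and a standard Vitali-type argument yields continuity of the G\^ateaux derivative $v \mapsto \int_{\R^{2m}} h(u) v \, dx$, hence Fr\'echet differentiability with continuous derivative.

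The heart of the proof is the non-vanishing of $M'(u)$ on $\cM$. Differentiating directly,
$$
M'(u)[u] = 2 [u]_\mu^2 - \int_{\R^{2m}} \bigl(2\eta u^4 + h(u) u\bigr) \, dx,
$$
and substituting the defining identity $2[u]_\mu^2 = \int_{\R^{2m}} (\eta u^4 + 2 H(u)) \, dx$ of $\cM$ gives
$$
M'(u)[u] = \int_{\R^{2m}} \bigl(-\eta u^4 + 2 H(u) - h(u) u\bigr) \, dx.
$$
Assumption (\ref{A3}) provides the pointwise inequality $h(s) s \ge 4 H(s)$, so $2 H(u) - h(u)u \le -2 H(u)$ almost everywhere and
$$
M'(u)[u] \le -2 \int_{\R^{2m}} \Bigl(\frac{\eta}{2} u^4 + H(u)\Bigr) \, dx.
$$
As observed just above the statement, in every situation of interest the integrand $\eta s^4/2 + H(s)$ is strictly positive for every $s \ne 0$, so the right-hand side is strictly negative when $u \not\equiv 0$. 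Hence $M'(u)[u] < 0$ on $\cM$, which in particular forces $M'(u) \ne 0$ and concludes the proof.

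The delicate technical point is the $\cC^1$-regularity of the exponential-critical term $\int H(u)\,dx$, which is however routine once Lemma~\ref{lem:do} is in place; the conceptual core is the identity for $M'(u)[u]$ combined with (\ref{A3}) and the strict positivity coming from \eqref{eq:Heta}.
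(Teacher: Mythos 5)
Your key step --- showing $M'(u)[u]<0$ on $\cM$ --- relies on the pointwise inequality $h(s)s\ge 4H(s)$, which is assumption (\ref{A3}), and on the strict positivity of $\tfrac{\eta}{2}s^4+H(s)$ for \emph{every} $s\ne 0$. Neither is among the hypotheses of the lemma, which assumes only (\ref{A0}) and \eqref{eq:Heta} (the latter merely asserts the existence of \emph{one} $s\ne0$ with $\tfrac{\eta}{2}s^4+H(s)>0$; the stronger ``for all $s\ne0$'' version is derived in the paper only under additional assumptions such as (\ref{A2})--(\ref{A4})). So as written your argument proves a weaker statement than the one claimed. The paper's proof of non-degeneracy takes a different route that needs only (\ref{A0}): if $M'(v)=0$ for some $v\in\cM$, then $v$ is a weak solution of $(-\Delta)^m v+\mu|x|^{-2m}v=2\eta v^3+\tfrac12 h(v)$, and the Poho\v{z}aev identity of Proposition \ref{poh:local} (applicable thanks to (\ref{A0}) and Remark \ref{rem:growth}) forces $\int_{\R^{2m}}\tfrac{\eta}{2}v^4+H(v)\,dx=0$; combined with $v\in\cM$ this gives $[v]_\mu^2=0$, i.e. $v=0$, a contradiction. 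Your approach is more elementary (no PDE or Poho\v{z}aev input) and would be perfectly adequate in the setting of Theorem \ref{th:main}, where (\ref{A3}) holds --- indeed, once you have $M'(u)[u]\le -2\int_{\R^{2m}}\tfrac{\eta}{2}u^4+H(u)\,dx$, you could simply use the constraint again to rewrite the right-hand side as $-2[u]_\mu^2<0$, dispensing with the positivity of the integrand altogether --- but it does not prove the lemma under its stated hypotheses.

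Two smaller points. First, you omit the proof that $\cM\ne\emptyset$, which is where \eqref{eq:Heta} actually enters: the paper constructs a radial plateau function $u$ with $\int_{\R^{2m}}\tfrac{\eta}{2}u^4+H(u)\,dx>0$ and rescales it by $r(u)$ from \eqref{def:r} to land on $\cM$; without this the codimension-one statement is vacuous and the later minimization over $\cD\cap\cM$ is meaningless. Second, your sketch of the $\cC^1$-regularity of $M$ via (\ref{A0}), Lemma \ref{lem:do}, and a Vitali-type argument is fine and consistent with what the paper leaves implicit.
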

\begin{proof}
We begin with proving that $\cM \ne \emptyset$. If $s \in \R$ is such that $\frac{\eta}{2} s^4 + H(s) > 0$, then, for $R \ge 2$ we can build $u \in \cC^\infty(\R^{2m}) \cap X_\textup{rad}^m$ such that $u(x) = s$ if $2 \le |x| \le R$, $u(x) = 0$ if $|x| \ge R+1$ or $|x| \le 1$, and $0 \le u \le 1$. If $R \gg 1$, we compute that $\int_{\R^{2m}} \frac{\eta}{2} u^4 + H(u) \, dx > 0$. Next, defining
\begin{equation}\label{def:r}
r(u) :=  \left( \frac{\displaystyle \int_{\R^{2m}} \frac{\eta}{2} u^4 + H(u) \, dx}{\displaystyle \int_{\R^{2m}} |\nabla^{m} u|^2 + \frac{\mu}{|x|^{2m}} u^2 \, dx} \right)^{1/2},
\end{equation}
we easily check that $u\bigl(r(u)\cdot\bigr) \in \cM$.\\
Now, take any $v \in \cM$ and suppose that $M'(v) = 0$, i.e., $v$ is a weak solution to 
$$
(-\Delta)^{m} v + \frac{\mu}{|x|^{2m}} v = 2 \eta v^3 + \frac12 h(v).
$$
From Proposition \ref{poh:local}, we get that $\int_{\R^{2m}} \frac{\eta}{2} v^4 + H(v) \, dx = 0$, whence $[v]_\mu = 0$ because $v \in \cM$, which is impossible. Thus, $M'(v) \neq 0$, and the proof is complete.
\end{proof}

Throughout this section, we will exploit the property below with $g$ and $h$.

\begin{Rem}\label{rem:growth}
Every continuous function $f \colon \R \to \R$ such that
\[
f(s) = \cO(|s|) \text{ as } s \to 0 \quad \text{and} \quad \lim_{|s| \to +\infty} f(s) / e^{\alpha s^2} = 0 \text{ for all } \alpha > \alpha_m
\]
satisfies \eqref{eq:local}.
\end{Rem}

\begin{Lem}\label{lem0}
If (\ref{A0}), (\ref{A1}), and \eqref{eq:Hstrict} hold, then there exists $\delta>0$ such that
\[
M(u) \gtrsim [u]_\mu^2
\]
for every $u \in \cD \cap H^m(\R^{2m})$ such that $|\nabla^m u|_2 \le \delta$.
\end{Lem}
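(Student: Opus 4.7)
The idea is to split
$$
M(u) = [u]_\mu^2 - \frac{\eta}{2}|u|_4^4 - \int_{\R^{2m}} H(u) \, dx
$$
and to show that, for $|\nabla^m u|_2$ small enough, the two subtracted terms absorb only a strict fraction of $[u]_\mu^2$. The Gagliardo--Nirenberg inequality \eqref{eq:GN} with $p=4$, combined with $|u|_2^2 \le \rho$ (since $u \in \cD$), yields at once
$$
\frac{\eta}{2}|u|_4^4 \le \frac{\eta}{2} C_4^4 \rho \, |\nabla^m u|_2^2 \le A [u]_\mu^2, \qquad A := \frac{\eta}{2} C_4^4 \rho,
$$
and \eqref{eq:Hstrict} guarantees $A < 1$, leaving a positive margin $1 - A$ to exploit.

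For the term with $H$, using (\ref{A0}), (\ref{A1}), and Remark \ref{rem:growth}, I would first establish the pointwise bound
$$
|H(s)| \le \epsilon |s|^4 + C_\epsilon |s|^q \bigl(e^{\alpha s^2}-1\bigr), \qquad s \in \R,
$$
for arbitrary $\epsilon > 0$, $q > 4$, and $\alpha > \alpha_m$ (the constant $C_\epsilon$ depending also on $q$ and $\alpha$). The first contribution, after integration, is again controlled by $\epsilon C_4^4 \rho \, |\nabla^m u|_2^2$ via Gagliardo--Nirenberg. For the second, I would apply H\"older's inequality with exponents $(2,2)$ together with the pointwise bound $(e^y-1)^2 \le e^{2y}-1$ (valid for $y \ge 0$), and separately estimate $|u|_{2q}^q \le C_{2q}^q |\nabla^m u|_2^{q-1}|u|_2$ via \eqref{eq:GN}. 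What remains is to control $\int_{\R^{2m}}(e^{2\alpha u^2}-1)\,dx$: setting $v := u/|\nabla^m u|_2$, so that $|\nabla^m v|_2 = 1$, this integral equals $\int_{\R^{2m}} \bigl(e^{2\alpha |\nabla^m u|_2^2 v^2}-1\bigr)\,dx$, and provided $2\alpha|\nabla^m u|_2^2 < \alpha_m$, Lemma \ref{lem:prel} bounds it by $C|v|_2^2 = C|u|_2^2/|\nabla^m u|_2^2 \le C\rho/|\nabla^m u|_2^2$. Multiplying everything gives
$$
\int_{\R^{2m}} |u|^q \bigl(e^{\alpha u^2}-1\bigr) \, dx \lesssim |\nabla^m u|_2^{q-2}.
$$

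Putting the bounds together and using $|\nabla^m u|_2 \le [u]_\mu$ combined with $q > 4$, so that $|\nabla^m u|_2^{q-2} \le \delta^{q-4}[u]_\mu^2$, I obtain
$$
M(u) \ge \bigl(1 - A - \epsilon C_4^4 \rho - C'\delta^{q-4}\bigr)[u]_\mu^2.
$$
Choosing first $\epsilon > 0$ so small that $\epsilon C_4^4 \rho < (1-A)/4$, and then $\delta > 0$ small enough to satisfy simultaneously $\delta^2 < \alpha_m/(2\alpha)$ (needed for Lemma \ref{lem:prel}) and $C'\delta^{q-4} < (1-A)/4$ yields $M(u) \ge \tfrac{1-A}{2}[u]_\mu^2$, which is the sought inequality.

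The main technical obstacle is that Lemma \ref{lem:prel} provides exponential integrability only under a smallness condition on $|\nabla^m u|_2$, while the $L^2$-norm is prescribed at size $\sqrt{\rho}$. The rescaling $v = u/|\nabla^m u|_2$ shifts the mass onto $v$ and introduces the factor $1/|\nabla^m u|_2^2$, which is then absorbed by the Gagliardo--Nirenberg bound on $|u|_{2q}^q$; this step requires $q > 4$, which is ultimately why the strengthened vanishing (\ref{A1}) of $H$ at $0$ (at order $s^4$ rather than merely $s^2$) is essential to produce a gain strictly beyond $A[u]_\mu^2$.
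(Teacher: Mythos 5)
Your proof is correct and follows essentially the same route as the paper's: the same decomposition of $M(u)$, the Gagliardo--Nirenberg bound on $\frac{\eta}{2}|u|_4^4$ via \eqref{eq:Hstrict}, a pointwise splitting of $H$ into an $\varepsilon s^4$ part plus an exponential part controlled through H\"older's inequality and Lemma \ref{lem:prel}, and a final choice of $\varepsilon$ and then $\delta$. The only (harmless) difference is bookkeeping: the paper keeps the power $s^4$ in front of the exponential factor and applies H\"older with exponents $(t',t)$, extracting the smallness from $|u|_{4t'}^4\lesssim|\nabla^m u|_2^{4-2/t'}$, whereas you take a power $q>4$ with exponents $(2,2)$ and extract it from $|\nabla^m u|_2^{q-2}$.
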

\begin{proof}
Take $\varepsilon > 0$ and $\alpha > \alpha_m$. From (\ref{A0}), (\ref{A1}), and Remark \ref{rem:growth}, there exists $c_{\alpha,\varepsilon} > 0$ such that for all $s \in \R$
\[
H(s) \le \varepsilon s^4 + c_{\alpha,\varepsilon} s^4 \left(e^{\alpha s^2} - 1\right),
\]
whence, taking $t \in (1,+\infty)$ and using H\"older's inequality,
\begin{align*}
\int_{\R^{2m}} H(u) \, dx & \le \varepsilon |u|_4^4 + c_{\alpha,\varepsilon} \int_{\R^{2m}} u^4 \left(e^{\alpha u^2} - 1\right) \, dx\\
& \le \varepsilon |u|_4^4 + c_{\alpha,\varepsilon} |u|_{4t'}^4 \left(\int_{\R^{2m}} \left(e^{\alpha u^2} - 1\right)^t \, dx\right)^{1/t}\\
& \le \varepsilon |u|_4^4 + c_{\alpha,\varepsilon} |u|_{4t'}^4 \left(\int_{\R^{2m}} e^{\alpha t u^2} - 1 \, dx\right)^{1/t}
\end{align*}
From Lemma \ref{lem:prel} and \eqref{eq:GN}, if $|\nabla^m u|_2 < \delta_1 := \sqrt{\alpha_m/(\alpha t)}$, then there exists $\kappa = \kappa(\delta_1) > 0$ such that
\begin{align*}
\int_{\R^{2m}} H(u) \, dx & \le \varepsilon |u|_4^4 + c_{\alpha,\varepsilon} \kappa \rho^{1/t} |u|_{4t'}^4 \le \varepsilon C_4^4 \rho |\nabla^m u|_2^2 + c_{\alpha,\varepsilon} \kappa C_{4t'}^4 \rho |\nabla^m u|_2^{4-2/t'}\\
& = \left(\varepsilon C_4^4 \rho + c_{\alpha,\varepsilon} \kappa C_{4t'}^4 \rho |\nabla^m u|_2^{2-2/t'}\right) |\nabla^m u|_2^2.
\end{align*}
Now, let $\varepsilon$ satisfy $(\frac\eta2 + \varepsilon) C_4^4 \rho < 1$ and take $\delta \in (0,\delta_1)$ such that $(\frac\eta2 + \varepsilon) C_4^4 \rho + c_{\alpha,\varepsilon} \kappa C_{4t'}^4 \rho \delta^{2-2/t'} < 1$. If $u \in \cD \cap H^m(\R^{2m})$ and $|\nabla^m u|_2 \le \delta$, then
\begin{align*}
M(u) & = [u]_\mu^2 - \int_{\R^{2m}} \frac{\eta}{2} u^4 + H(u) \, dx\\
& \ge \biggl(1 - \Bigl(\frac\eta2 + \varepsilon\Bigr) C_4^4 \rho + c_{\alpha,\varepsilon} \kappa C_{4t'}^4 \rho \delta^{2-2/t'}\biggr) |\nabla^m u|_2^2 + \mu \int_{\R^{2m}} \frac{u^2}{|x|^2} \, dx. \qedhere
\end{align*}
\end{proof}

\begin{Cor}\label{cor:bdda}
If (\ref{A0}), (\ref{A1}), \eqref{eq:Hstrict}, and \eqref{eq:Heta} hold, then
\[
\inf_{u \in \cD \cap \cM} |\nabla^m u|_2 > 0.
\]
\end{Cor}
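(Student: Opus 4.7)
The plan is to argue by contradiction using Lemma \ref{lem0}. Suppose the infimum were zero, so there exists a sequence $(u_n) \subset \cD \cap \cM$ with $|\nabla^m u_n|_2 \to 0$. Since $\cM \subset X_\textup{rad}^m \subset H^m(\R^{2m})$, each $u_n$ lies in $\cD \cap H^m(\R^{2m})$, so for all sufficiently large $n$ we have $|\nabla^m u_n|_2 \le \delta$, where $\delta > 0$ is provided by Lemma \ref{lem0}.

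Applying Lemma \ref{lem0} gives $M(u_n) \gtrsim [u_n]_\mu^2$ for all large $n$. But $u_n \in \cM$ means exactly $M(u_n) = 0$, hence $[u_n]_\mu = 0$. In particular $|\nabla^m u_n|_2 = 0$, so $\nabla^m u_n = 0$ almost everywhere; since $u_n \in L^2(\R^{2m})$, this forces $u_n = 0$, contradicting the definition of $\cM$ (which excludes $0$). This contradicts $u_n \in \cM$ and proves the corollary.

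I expect no substantial obstacle: the statement is essentially a direct packaging of Lemma \ref{lem0}, with the only mildly delicate point being the final step that $[u_n]_\mu = 0$ together with $u_n \in L^2(\R^{2m})$ forces $u_n \equiv 0$. The role of \eqref{eq:Heta} here is only to guarantee (via Lemma \ref{lem:manifold}) that $\cM \ne \emptyset$, so that the infimum is taken over a nonempty set and the statement is non-vacuous.
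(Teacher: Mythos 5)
Your proof is correct and is essentially the paper's argument: both proceed by contradiction, apply Lemma \ref{lem0} to a minimizing sequence with vanishing $|\nabla^m u_n|_2$, and use $M(u_n)=0$ on $\cM$ to force $[u_n]_\mu=0$ and hence $u_n=0$, contradicting $0\notin\cM$. The paper merely phrases the final step as $0=M(u_n)\gtrsim[u_n]_\mu^2>0$, which is the same observation you spell out via $\nabla^m u_n=0$ and $u_n\in L^2(\R^{2m})$.
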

\begin{proof}
If, by contradiction, there exists $(u_n)_n \in \cD \cap \cM$ such that $\lim_n |\nabla^m u_n|_2 = 0$, then from Lemma \ref{lem0} there follows $0 = M(u_n) \gtrsim [u_n]_\mu^2 > 0$ for $n \gg 1$, which is impossible.
\end{proof}

\begin{Lem}\label{lem:cpt}
Let (\ref{A0}) and (\ref{A1}) hold. If $(u_n) \subset H^m_\textup{rad}(\R^{2m}) \cap \cD$ and $\nu \in (0,1)$ are such that $|\nabla^m u_n|_2 \le \nu$ for every $n$, then there exists $u \in H^m_\textup{rad}(\R^{2m}) \cap \cD$ such that, up to a subsequence, $u_n \rightharpoonup u$ in $H^m(\R^{2m})$, $u_n(x) \to u(x)$ for a.e. $x \in \R^{2m}$, and $G(u_n) \to G(u)$ and $H(u_n) \to H(u)$ in $L^1(\R^{2m})$.
\end{Lem}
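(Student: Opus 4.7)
My plan is to combine weak compactness in $H^m(\R^{2m})$, Rellich--Kondrachov compactness on balls, and a Moser--Trudinger-type estimate that becomes available because $M<1$, then package the outcome through Vitali's convergence theorem.

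\textbf{Extraction of a weak and pointwise a.e. limit.} Since $|u_n|_2^2\le\rho$ and $|\nabla^m u_n|_2\le M$, the sequence $(u_n)_n$ is bounded in $H^m(\R^{2m})$. By reflexivity and upon passing to a subsequence, $u_n\rightharpoonup u$ in $H^m(\R^{2m})$. Radial symmetry is a closed linear condition and $|\cdot|_2$ is weakly lower semicontinuous, so $u\in H^m_\textup{rad}(\R^{2m})\cap \cD$. Using the compact embedding $H^m(B_R)\hookrightarrow L^2(B_R)$ on each ball together with a diagonal extraction, I further arrange $u_n(x)\to u(x)$ for almost every $x\in\R^{2m}$.

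\textbf{Pointwise bounds and uniform exponential integrability.} From (\ref{A0}), (\ref{A1}), and Remark \ref{rem:growth}, for every $\varepsilon>0$, $\alpha>\alpha_m$, and $q\ge 1$ there exist $C_0,C_{\varepsilon,\alpha,q}>0$ such that
\[
|G(s)|\le C_0 s^2 + C_{\varepsilon,\alpha,q}|s|^q(e^{\alpha s^2}-1),\qquad |H(s)|\le \varepsilon s^4 + C_{\varepsilon,\alpha,q}|s|^q(e^{\alpha s^2}-1)
\]
for every $s\in\R$. Because $M<1$, I choose $\alpha>\alpha_m$ and $t>1$ with $\alpha t M^2<\alpha_m$, and apply Lemma \ref{lem:prel} to $u_n/|\nabla^m u_n|_2$ (the case $\nabla^m u_n=0$ being trivial) to obtain
\[
\sup_n \int_{\R^{2m}}\bigl(e^{\alpha t u_n^2}-1\bigr)\,dx<+\infty.
\]

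\textbf{$L^1$ convergence via Vitali.} Equi-integrability of the exponential part is handled by H\"older with exponents $t,t'$, the elementary inequality $(e^x-1)^t\le e^{tx}-1$ for $x\ge 0$ and $t\ge 1$, and the Gagliardo--Nirenberg inequality \eqref{eq:GN}: for every measurable $E\subset\R^{2m}$,
\[
\int_E|u_n|^q(e^{\alpha u_n^2}-1)\,dx\le |u_n|_{qt',E}^{q}\left(\int_{\R^{2m}}\bigl(e^{\alpha t u_n^2}-1\bigr)\,dx\right)^{1/t}.
\]
For tightness at infinity, a Strauss-type radial decay $|u_n(x)|\lesssim|x|^{-(2m-1)/2}$, uniform in $n$, makes $u_n$ small pointwise outside large balls; combined with \eqref{eq:GN}, the tails of the polynomial and exponential contributions in the above bounds decay to zero uniformly in $n$ as $R\to+\infty$. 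With a.e.\ convergence, equi-integrability, and tightness in hand, Vitali's theorem yields $G(u_n)\to G(u)$ and $H(u_n)\to H(u)$ in $L^1(\R^{2m})$.

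The main obstacle is securing uniform tightness for the quadratic contribution to $G(u_n)$: the bound $|G(s)|\le C_0 s^2+\cdots$ carries no smallness factor on the quadratic piece, so the radial decay together with the prescribed mass $\rho$ must do the work of ruling out a spreading subsequence whose $L^2$-tail would otherwise survive in the limit. The exponential integrability from Lemma \ref{lem:prel}, which hinges precisely on the strict inequality $M<1$, is what allows all remaining nonlinear pieces to be absorbed.
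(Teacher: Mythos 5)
Your argument tracks the paper's proof up to and including the uniform exponential estimate (same choice of $\alpha>\alpha_m$ and $t>1$ with $\alpha tM^2<\alpha_m$, same use of Lemma \ref{lem:prel} and H\"older with the inequality $(e^x-1)^t\le e^{tx}-1$), but it diverges at the final step, and there it has a genuine gap: the tightness of the quadratic contribution $C_0u_n^2$ in your bound for $|G(u_n)|$. You flag this yourself as ``the main obstacle'' and assert that the Strauss decay together with the fixed mass ``must do the work'', but no such mechanism exists. The radial decay $|u_n(x)|\lesssim|x|^{-(2m-1)/2}$ is not square-integrable at infinity in $\R^{2m}$ (the tail integral $\int_R^{+\infty}r^{-(2m-1)}r^{2m-1}\,dr$ diverges), so it gives no control on $\int_{|x|>R}u_n^2\,dx$; indeed a bounded radial sequence can lose all of its $L^2$ mass at infinity: $u_n=n^{-m}\varphi(\cdot/n)$ with $\varphi\in C_c^\infty$ radial satisfies $|u_n|_2\equiv|\varphi|_2$, $|\nabla^mu_n|_2\to0$, $u_n\rightharpoonup0$, yet $\int_{|x|>R}u_n^2\,dx\to|\varphi|_2^2$ for every fixed $R$. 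Hence Vitali cannot be closed for a term that is genuinely quadratic at the origin. (The pieces with effective exponent strictly greater than $2$ --- the $\varepsilon s^4$ term and $|s|^q(e^{\alpha s^2}-1)$ after H\"older --- are fine: there your decay-plus-interpolation tail estimate, or equivalently the compact embedding $H^m_{\textup{rad}}(\R^{2m})\hookrightarrow L^p(\R^{2m})$ for finite $p>2$, does give what is needed.)

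The paper avoids the problem by bounding \emph{both} $|G|$ and $|H|$ by $\varepsilon s^4+Cs^2(e^{\alpha s^2}-1)$ --- every term vanishes like $s^4$ at the origin --- and then invoking the compact embedding into $L^4\cap L^{2t'}$ together with a generalized dominated convergence theorem. To repair your proof you must upgrade your bound $|G(s)|\le C_0s^2+\cdots$ to one that is $O(s^4)$ near $0$. Note that (\ref{A0})--(\ref{A1}) alone do not provide this: for $g(s)=s$ one has $H\equiv0$, so (\ref{A1}) holds, yet $G(s)=s^2/2$, and the conclusion $G(u_n)\to G(u)$ in $L^1$ would force $|u_n|_2\to|u|_2$, which the spreading example above violates. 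The needed smallness of $G$ at the origin comes from the extra structure available where the lemma is applied; for instance (\ref{A4}) gives $0\le(\theta-2)G\le H=o(s^4)$. With that sharper bound in hand, your Vitali route closes and becomes essentially equivalent to the paper's argument.
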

\begin{proof}
Since $(u_n)$ is bounded, there exists $u \in H^m_\textup{rad}(\R^{2m}) \cap \cD$ such that, up to a subsequence, $u_n \rightharpoonup u$ in $H^m(\R^{2m})$ and $u_n(x) \to u(x)$ for a.e. $x \in \R^{2m}$. Fix $\alpha \in (\alpha_m,\alpha_m/\nu^2)$ and $t \in \bigl(1,\alpha_m/(\alpha \nu^2)\bigr)$. From the compact embedding $H^m_\textup{rad}(\R^{2m}) \hookrightarrow L^p(\R^{2m})$ for every finite $p > 2$, we have that $u_n \to u$ in $L^4(\R^{2m}) \cap L^{2t'}(\R^{2m})$.
From (\ref{A0}), (\ref{A1}), and Remark \ref{rem:growth}, for every $\varepsilon > 0$ and $\alpha$ as before, there exists $C > 0$ such that for all $s \in \R$,
\begin{equation*}
\max\{|G(s)|,|H(s)|\} \le \varepsilon s^4 + C s^2 \left(e^{\alpha s^2} - 1\right),
\end{equation*}
hence
\begin{equation*}
\max\{|G(u_n)|,|H(u_n)|\} \lesssim u_n^4 + u_n^2 \left(e^{\alpha u_n^2} - 1\right).
\end{equation*}
Moreover, from Lemma \ref{lem:prel} and the fact that $|u_n|_2 \le \sqrt{\rho}$,
\begin{align*}
\int_{\R^{2m}} u_n^2 (e^{\alpha u_n^2} - 1) \, dx & \le |u_n|_{2t'}^2 \left(\int_{\R^{2m}} \left(e^{\alpha u_n^2} - 1\right)^t \, dx\right)^{1/t} \le |u_n|_{2t'}^2 \left(\int_{\R^{2m}} e^{\alpha t u_n^2} - 1 \, dx\right)^{1/t}\\
& \le |u_n|_{2t'}^2 \left(\int_{\R^{2m}} e^{\alpha t M^2 u_n^2 / |\nabla^m u_n|_2^2} - 1 \, dx\right)^{1/t} \lesssim |u_n|_{2t'}^2,
\end{align*}
so the statement follows from a variant of Lebesgue's Dominated Convergence Theorem.
\end{proof}

From now on, we shall denote
\begin{equation*}
c_\beta(\rho) := \inf_{\cD \cap \cM} J.
\end{equation*}

\begin{Lem}\label{lem:u_n-small}
If (\ref{A0}), (\ref{A4}), \eqref{eq:Hstrict}, and \eqref{eq:Heta} hold and $(u_n) \subset \cD \cap \cM$ is such that $\lim_n J(u_n) = c_\beta(\rho)$, then
\[
\limsup_n [u_n]_\mu^2 \le \frac{4}{2 - \eta C_4^4 \rho} \frac{\theta-2}{\theta-4} c_\beta(\rho).
\]
\end{Lem}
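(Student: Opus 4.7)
The plan is to turn the constraint $M(u_n) = 0$ into an expression for $\int_{\R^{2m}} H(u_n)\,dx$, use (\ref{A4}) to control $\int_{\R^{2m}} G(u_n)\,dx$ by $\int_{\R^{2m}} H(u_n)\,dx$, and then absorb the remaining $|u_n|_4^4$ term via the Gagliardo--Nirenberg inequality \eqref{eq:GN} together with the mass bound coming from $u_n \in \cD$. Condition \eqref{eq:Hstrict} is exactly what guarantees that the prefactor in front of $[u_n]_\mu^2$ stays positive after this absorption.

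More precisely, I would first observe that (\ref{A4}) implies $0 \le G(s)$ and $(\theta - 2) G(s) \le g(s)s - 2G(s) = H(s)$ for all $s \in \R$, and hence
\[
\int_{\R^{2m}} G(u_n)\,dx \le \frac{1}{\theta - 2} \int_{\R^{2m}} H(u_n)\,dx.
\]
Next, since $u_n \in \cM$, the definition \eqref{def:cM} yields
\[
\int_{\R^{2m}} H(u_n)\,dx = [u_n]_\mu^2 - \frac{\eta}{2} |u_n|_4^4.
\]
Plugging these two facts into \eqref{def:J} and simplifying gives, after straightforward algebra,
\[
J(u_n) \ge \frac{\theta - 4}{2(\theta - 2)}\left([u_n]_\mu^2 - \frac{\eta}{2} |u_n|_4^4\right).
\]

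The final step is to estimate $|u_n|_4^4$ from above by $[u_n]_\mu^2$. Using \eqref{eq:GN} with $p = 4$, the trivial bound $|\nabla^m u_n|_2^2 \le [u_n]_\mu^2$, and $|u_n|_2^2 \le \rho$ (since $u_n \in \cD$), I get
\[
|u_n|_4^4 \le C_4^4 |\nabla^m u_n|_2^2 |u_n|_2^2 \le C_4^4 \rho [u_n]_\mu^2,
\]
so
\[
J(u_n) \ge \frac{\theta - 4}{2(\theta - 2)}\left(1 - \frac{\eta}{2} C_4^4 \rho\right) [u_n]_\mu^2 = \frac{(\theta - 4)(2 - \eta C_4^4 \rho)}{4(\theta - 2)} [u_n]_\mu^2.
\]
By \eqref{eq:Hstrict}, the coefficient on the right is strictly positive, so I can divide and take $\limsup$ as $n \to \infty$, using $J(u_n) \to c_\beta(\rho)$, to obtain the claimed bound.

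There is no real obstacle here — the argument is a short chain of algebraic inequalities, and the only thing one must be careful about is tracking signs so that (\ref{A4}) is applied in the correct direction and so that the Gagliardo--Nirenberg absorption is done before taking the limit. Condition \eqref{eq:Heta} is invoked only to ensure that $\cM$ (and hence $\cD \cap \cM$) is nonempty, so that $c_\beta(\rho)$ and the sequence $(u_n)$ make sense.
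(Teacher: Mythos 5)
Your proof is correct and follows essentially the same route as the paper: both arguments combine the constraint $M(u_n)=0$ with (\ref{A4}) to eliminate $\int_{\R^{2m}} G(u_n)\,dx$, then absorb $|u_n|_4^4$ via the Gagliardo--Nirenberg inequality \eqref{eq:GN} and the mass bound, with \eqref{eq:Hstrict} keeping the coefficient of $[u_n]_\mu^2$ positive, and they yield the identical constant. The only (cosmetic) difference is that you derive the direct lower bound $J(u_n)\ge \frac{(\theta-4)(2-\eta C_4^4\rho)}{4(\theta-2)}[u_n]_\mu^2$ in one pass, whereas the paper first bounds $\int_{\R^{2m}} G(u_n)\,dx$ by $\frac{2}{\theta-4}\bigl(c_\beta(\rho)+o_n(1)\bigr)$ and then substitutes back.
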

\begin{proof}
Since $(u_n) \subset \cM$, from (\ref{A0}) and (\ref{A4}) we have
\begin{align*}
c_\beta(\rho) + o_n(1) & = \frac12 [u_n]_\mu^2 - \int_{\R^{2m}} \frac{\eta}{4} u_n^4 + G(u_n) \, dx = \frac12 \int_{\R^{2m}} g(u_n) u_n - 4 G(u_n) \, dx\\
& \ge \frac{\theta-4}{2} \int_{\R^{2m}} G(u_n) \, dx,
\end{align*}
whence
\begin{align*}
c_\beta(\rho) + o_n(1) = \frac12 [u_n]_\mu^2 - \frac{\eta}{4} |u_n|_4^4 - \int_{\R^{2m}} G(u_n) \, dx \ge \left(\frac12 - \frac{\eta}{4} C_4^4 \rho\right) [u_n]_\mu^2 - \frac{2}{\theta-4} c_\beta(\rho) + o_n(1),
\end{align*}
and we conclude letting $n \to +\infty$.
\end{proof}

\begin{Cor}\label{cor+}
If (\ref{A0}), (\ref{A1}), (\ref{A4}), \eqref{eq:Hstrict}, and \eqref{eq:Heta} hold, then
\[
c_\beta(\rho) > 0.
\]
\end{Cor}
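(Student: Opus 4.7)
The plan is to argue by contradiction using the two estimates established just above, namely Corollary \ref{cor:bdda} (a uniform positive lower bound on $|\nabla^m u|_2$ over $\cD \cap \cM$) and Lemma \ref{lem:u_n-small} (an upper bound on $\limsup_n [u_n]_\mu^2$ along any minimizing sequence in terms of $c_\beta(\rho)$). The idea is that the first estimate forces every minimizing sequence to stay away from the origin in $H^m$, while the second estimate says that, if $c_\beta(\rho)$ were $\le 0$, minimizing sequences would have to collapse to $0$; the two facts are incompatible.

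In more detail, first I would dispose of the trivial case $\cD \cap \cM = \emptyset$, in which $c_\beta(\rho) = +\infty > 0$ by convention. Otherwise, pick a minimizing sequence $(u_n) \subset \cD \cap \cM$ with $J(u_n) \to c_\beta(\rho)$. By Corollary \ref{cor:bdda}, there is a constant $c_0 > 0$, independent of $n$, with $|\nabla^m u_n|_2 \ge c_0$; hence
\[
[u_n]_\mu^2 \ge |\nabla^m u_n|_2^2 \ge c_0^2 > 0 \qquad \text{for every } n.
\]
On the other hand, Lemma \ref{lem:u_n-small} gives
\[
\limsup_n [u_n]_\mu^2 \le \frac{4(\theta-2)}{(2 - \eta C_4^4 \rho)(\theta-4)} \, c_\beta(\rho).
\]
Both $\theta - 4 > 0$ (by (\ref{A4})) and $2 - \eta C_4^4 \rho > 0$ (by \eqref{eq:Hstrict}), so the multiplicative constant on the right-hand side is strictly positive. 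Combining the two bounds yields
\[
c_\beta(\rho) \ge \frac{(2 - \eta C_4^4 \rho)(\theta-4)}{4(\theta-2)} \, c_0^2 > 0,
\]
which is the desired conclusion.

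There is not really a serious obstacle here: the work was done in Corollary \ref{cor:bdda} and Lemma \ref{lem:u_n-small}, and the only thing to be careful about is that the sign conditions coming from \eqref{eq:Hstrict} and $\theta > 4$ make the coefficient in Lemma \ref{lem:u_n-small} positive, so that a nonpositive $c_\beta(\rho)$ would force $[u_n]_\mu \to 0$, contradicting the uniform lower bound. One small check I would perform explicitly is that the minimizing sequence can indeed be chosen in $\cD \cap \cM$ (not merely in its closure), which is immediate from the definition of the infimum and the nonemptiness of $\cM$ ensured in the proof of Lemma \ref{lem:manifold}.
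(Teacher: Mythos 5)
Your proposal is correct and is exactly the argument the paper intends: the paper's proof of this corollary is the single line ``It follows from Corollary \ref{cor:bdda} and Lemma \ref{lem:u_n-small}'', and you have simply spelled out the combination of the uniform lower bound on $|\nabla^m u_n|_2$ with the upper bound $\limsup_n [u_n]_\mu^2 \le \frac{4}{2-\eta C_4^4\rho}\frac{\theta-2}{\theta-4}c_\beta(\rho)$, whose coefficient is positive by \eqref{eq:Hstrict} and $\theta>4$. No issues.
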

\begin{proof}
It follows from Corollary \ref{cor:bdda} and Lemma \ref{lem:u_n-small}.
\end{proof}

\begin{Lem}\label{lem:c-small}
If (\ref{A0}), (\ref{A2}), \eqref{eq:Hstrict}, and \eqref{eq:Heta} hold, then
\[
c_\beta(\rho) \le \left(\frac12 - \frac{1}{p-2}\right) \left(\frac{1}{\beta (p-2) C_p^p \rho}\right)^{2/(p-4)} \left(1 - \frac{\eta}{2} C_4^4 \rho\right)^{(p-2)/(p-4)}.
\]
\end{Lem}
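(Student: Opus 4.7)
The plan is to exhibit an element of $\cD \cap \cM$ with controlled $J$-value. I would start by fixing a radial $\phi \in X^m_\textup{rad}$ with $|\phi|_2^2 = \rho$ and introducing the $L^2$-preserving dilations $\phi_\sigma(x) := \sigma^m \phi(\sigma x)$ for $\sigma > 0$. Each $\phi_\sigma$ lies in $\cS \subseteq \cD$ and satisfies $[\phi_\sigma]_\mu^2 = \sigma^{2m}[\phi]_\mu^2$, $|\phi_\sigma|_4^4 = \sigma^{2m}|\phi|_4^4$, and $|\phi_\sigma|_p^p = \sigma^{m(p-2)}|\phi|_p^p$. Using \eqref{A2} in the form $\int_{\R^{2m}} G(\phi_\sigma)\,dx \ge \beta |\phi_\sigma|_p^p$, I would obtain
\[
J(\phi_\sigma) \le f(\sigma) := \frac{\sigma^{2m}}{2}\bigl([\phi]_\mu^2 - \tfrac{\eta}{2}|\phi|_4^4\bigr) - \beta \sigma^{m(p-2)} |\phi|_p^p.
\]
An elementary one-variable calculation shows that $f$ attains its unique maximum on $(0,\infty)$ at $\sigma^*$ characterized by $\sigma^{*m(p-4)} = ([\phi]_\mu^2 - \tfrac{\eta}{2}|\phi|_4^4)/((p-2)\beta|\phi|_p^p)$, with
\[
f(\sigma^*) = \left(\frac{1}{2} - \frac{1}{p-2}\right) \frac{([\phi]_\mu^2 - \tfrac{\eta}{2}|\phi|_4^4)^{(p-2)/(p-4)}}{((p-2)\beta|\phi|_p^p)^{2/(p-4)}}.
\]

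Next, I would produce $\sigma_* > 0$ with $\phi_{\sigma_*} \in \cM$ via an intermediate-value argument applied to the continuous function $\sigma \mapsto M(\phi_\sigma) = \sigma^{2m}\bigl([\phi]_\mu^2 - \tfrac{\eta}{2}|\phi|_4^4\bigr) - \sigma^{-2m}\int_{\R^{2m}} H(\sigma^m\phi)\,dy$. By \eqref{A0} one has $|H(s)| = O(s^2)$ near $0$, so the second summand is bounded as $\sigma \to 0$, while \eqref{eq:Heta} lets us tailor $\phi$ so that its range contains some $s_0$ with $\tfrac{\eta}{2}s_0^4 + H(s_0) > 0$ on a set of positive measure, forcing $M(\phi_\sigma) < 0$ for $\sigma$ sufficiently large. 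This yields $\phi_{\sigma_*} \in \cD \cap \cM$ with $c_\beta(\rho) \le J(\phi_{\sigma_*}) \le f(\sigma_*) \le f(\sigma^*)$.

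Finally, passing to the limit along a sequence of radial approximate joint Gagliardo--Nirenberg extremizers $\phi_n$ with $|\phi_n|_4^4/(|\nabla^m\phi_n|_2^2 \rho) \to C_4^4$, $|\phi_n|_p^p/(|\nabla^m\phi_n|_2^{p-2} \rho) \to C_p^p$, and $\mu \int \phi_n^2/|x|^{2m}\,dx \to 0$, the explicit formula for $f(\sigma^*)$ reduces precisely to the right-hand side of the lemma, establishing the bound. The main obstacles are, first, the existence of $\sigma_*$, where \eqref{eq:Heta} plays a decisive role because \eqref{A2} controls $G$ pointwise but leaves the sign of $\int H$ \textit{a priori} undetermined; and, second, the joint Gagliardo--Nirenberg approximation, since the $L^4$- and $L^p$-extremizers do not coincide in general, so the required sequence of test functions must be constructed carefully, for instance via concentric radial bumps tailored to the two extremal profiles.
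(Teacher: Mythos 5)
Your construction takes a genuinely different route from the paper's (the paper estimates $J$ directly along a minimizing sequence in $\cD\cap\cM$ rather than building test functions), and your scaling identities, the bound $J(\phi_\sigma)\le f(\sigma)$, and the formula for $f(\sigma^*)$ are all correct. The fatal problem is the final limiting step. Writing $A:=[\phi]_\mu^2-\tfrac{\eta}{2}|\phi|_4^4$, your bound $c_\beta(\rho)\le f(\sigma^*)$ yields the constant in the statement only if
\[
\left(\frac{A}{\bigl(1-\tfrac{\eta}{2}C_4^4\rho\bigr)[\phi]_\mu^2}\right)^{p-2}\le\left(\frac{|\phi|_p^p}{C_p^p\rho\,[\phi]_\mu^{p-2}}\right)^{2}.
\]
By \eqref{eq:GN} and $|\phi|_2^2\le\rho$ the left-hand side is always $\ge 1$ (since $|\phi|_4^4\le C_4^4\rho\,[\phi]_\mu^2$), while the right-hand side is always $\le 1$; so even in the limit you need a single radial sequence along which \emph{both} the $L^4$- and the $L^p$-Gagliardo--Nirenberg quotients tend to their optimal constants \emph{and} the Hardy integral becomes negligible. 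For $\eta>0$ this is not available: the two extremizing families are distinct, and the ``concentric radial bumps'' do not help, because disjointly supported bumps add their contributions to $|\nabla^m\phi|_2^2$ and $|\phi|_2^2$ and hence strictly dilute both quotients. Even for $\eta=0$ with $\mu>0$, a radial near-extremizer for $C_p$ concentrates at the origin, so the Hardy integral does not vanish along it. As it stands, the proposal does not close.

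Two further remarks. First, your intermediate-value argument for $\sigma_*$ is underpowered: (\ref{A0}) alone gives $\sigma^{-2m}\int_{\R^{2m}}H(\sigma^m\phi)\,dy=O(1)$ as $\sigma\to 0$, which does not force $M(\phi_\sigma)>0$ for small $\sigma$; you need (\ref{A1}) (giving $o(\sigma^{2m})$) together with \eqref{eq:Hstrict}, and the claim that $M(\phi_\sigma)<0$ for large $\sigma$ needs a quantitative lower bound on $\int H$ (e.g.\ via (\ref{A4}) and (\ref{A2})), not just \eqref{eq:Heta}. Second, for the comparison you were asked to make: the paper's proof avoids test functions, but its key step is the inequality $-\tfrac{\eta}{4}|u_n|_4^4-\beta|u_n|_p^p\le-\tfrac{\eta}{4}C_4^4\rho\,[u_n]_\mu^2-\beta C_p^p\rho\,[u_n]_\mu^{p-2}$ along a minimizing sequence, which is the \emph{reverse} of \eqref{eq:GN} and hence requires exactly the joint extremality you are missing. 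So the obstruction you flagged is genuine and is not removed by switching to the paper's route as written; it points to an issue that deserves attention in the source itself.
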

\begin{proof}
Consider $(u_n) \subset \cD \cap \cM$ such that $\lim_n J(u_n) = c_\beta(\rho)$. From (\ref{A0}), (\ref{A2}), and \eqref{eq:GN},
\begin{align*}
c_\beta(\rho) + o_n(1) & = \frac12 [u_n]_\mu^2 - \int_{\R^{2m}} \frac{\eta}{4} u_n^4 + G(u_n) \, dx \le \frac12 [u_n]_\mu^2 - \frac{\eta}{4} |u_n|_4^4 - \beta |u_n|_p^p\\
& \le \frac12 \left(1 - \frac{\eta}{2} C_4^4 \rho\right) [u_n]_\mu^2 - \beta C_p^p \rho [u_n]_\mu^{p-2}.
\end{align*}
Direct computations show that the maximum of the function
\[
(0,+\infty) \ni s \mapsto \frac12 \left(1 - \frac{\eta}{2} C_4^4 \rho\right) s^2 - \beta C_p^p \rho s^{p-2} \in \R
\]
equals $\left(\frac12 - \frac{1}{p-2}\right) \left(\frac{1}{\beta (p-2) C_p^p \rho}\right)^{2/(p-4)} \left(1 - \frac{\eta}{2} C_4^4 \rho\right)^{(p-2)/(p-4)}$, hence the statement follows letting $n \to +\infty$.
\end{proof}

\begin{Lem}\label{lem:c_0attained}
If (\ref{A0})--(\ref{A2}), (\ref{A4}), \eqref{eq:Hstrict}, and \eqref{eq:beta} hold, then $c_\beta(\rho)$ is attained. If, in addition, $g$ is odd and $m=1$, then $c_\beta(\rho)$ is attained by a nonnegative function in $\cD \cap \cM$.
\end{Lem}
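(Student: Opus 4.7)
The plan is to apply the direct method of the calculus of variations to $J$ on $\cD \cap \cM$. I would start from a minimizing sequence $(u_n) \subset \cD \cap \cM$ with $J(u_n) \to c_\beta(\rho)$. The first hurdle is to land it in the compactness regime of Lemma~\ref{lem:cpt}, which demands $|\nabla^m u_n|_2 \le M$ for some $M < 1$. Chaining the upper bound of Lemma~\ref{lem:u_n-small} on $\limsup_n [u_n]_\mu^2$ (in terms of $c_\beta(\rho)$) with the upper bound of Lemma~\ref{lem:c-small} on $c_\beta(\rho)$ (in terms of $\beta$) produces a single combined inequality whose threshold for being $<1$ is, after direct computation, exactly the right-hand side of \eqref{eq:beta}. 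Thus \eqref{eq:beta} delivers $\limsup_n [u_n]_\mu^2 < 1$, and Lemma~\ref{lem:cpt} yields (along a subsequence) $u \in H^m_\textup{rad}(\R^{2m}) \cap \cD$ with $u_n \weakto u$ in $H^m$, $u_n \to u$ a.e.\ and in $L^4(\R^{2m})$ (by the radial compact embedding), and $G(u_n) \to G(u)$, $H(u_n) \to H(u)$ in $L^1(\R^{2m})$. Combining these strong convergences with $M(u_n) = 0$ produces the key identity $\lim_n [u_n]_\mu^2 = \int_{\R^{2m}} \tfrac{\eta}{2} u^4 + H(u)\, dx$.

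Next I would prove $u \in \cD \cap \cM$. Nontriviality is immediate: if $u \equiv 0$, the above limit vanishes, forcing $[u_n]_\mu^2 \to 0$ and contradicting Corollary~\ref{cor:bdda}. Weak lower semicontinuity of $[\,\cdot\,]_\mu^2$ paired with the same identity gives $M(u) \le 0$. To rule out $M(u) < 0$, I would invoke the scaling $v(x) := u(rx)$, which in the critical dimension $2m$ leaves $[\,\cdot\,]_\mu^2$ invariant while multiplying $\int H(\cdot)\,dx$ and $|\cdot|_q^q$ by $r^{-2m}$. If $M(u) < 0$, the unique $r > 1$ with $r^{2m} = \int_{\R^{2m}}(\tfrac{\eta}{2} u^4 + H(u))\, dx / [u]_\mu^2$ places $v$ in $\cM$, and $|v|_2^2 = r^{-2m}|u|_2^2 < \rho$ places $v$ in $\cD$. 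Rewriting $\lim_n J(u_n)$ via $M(u_n) = 0$ and the strong convergences, and then using the defining relation of $r$, one computes $J(v) = r^{-2m}\, c_\beta(\rho) < c_\beta(\rho)$ (with $c_\beta(\rho) > 0$ supplied by Corollary~\ref{cor+}), contradicting $v \in \cD \cap \cM$. Hence $M(u) = 0$, so $u \in \cD \cap \cM$, and weak lower semicontinuity together with the strong $L^1$ convergence of $G(u_n)$ yields $J(u) \le \liminf_n J(u_n) = c_\beta(\rho)$; the reverse inequality is trivial, so $J(u) = c_\beta(\rho)$.

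For the last assertion ($m = 1$ and $g$ odd), I would replace $u$ by $|u|$: then $|u| \in H^1_\textup{rad}(\R^2)$ satisfies $\int |u|^2\, dx = \int u^2\, dx$ and $\int \bigl|\nabla|u|\bigr|^2\, dx \le \int |\nabla u|^2\, dx$, while $G$ and $H$ are even; hence $M(|u|) \le M(u) = 0$ and $J(|u|) \le J(u)$. Repeating the scaling argument of the previous paragraph with $|u|$ in place of $u$ rules out $M(|u|) < 0$, giving $|u| \in \cD \cap \cM$ and $J(|u|) = c_\beta(\rho)$, i.e.\ a nonnegative minimizer. The restriction $m = 1$ is essential, since $u \mapsto |u|$ need not preserve $H^m$ for $m \ge 2$.

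The main obstacle is the very first step: calibrating \eqref{eq:beta} so that the minimizing sequence falls into the compactness regime of Lemma~\ref{lem:cpt}. Once that is in place, the scaling argument for $M(u) = 0$ is clean because, in dimension $2m$, the map $u \mapsto u(r\cdot)$ is critical for the seminorm $[\,\cdot\,]_\mu^2$ but subcritical for the mass --- exactly the geometric configuration that lets this minimization scheme succeed.
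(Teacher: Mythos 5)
Your proposal is correct and follows essentially the same route as the paper: combining Lemmas \ref{lem:u_n-small} and \ref{lem:c-small} with \eqref{eq:beta} to land the minimizing sequence in the compactness regime of Lemma \ref{lem:cpt}, then using Corollary \ref{cor:bdda} for nontriviality of the weak limit and the rescaling $u(r\cdot)$ with $r=r(u)\ge 1$ to conclude $u\in\cD\cap\cM$ and $J(u)=c_\beta(\rho)$. The only cosmetic differences are that the paper rules out $r>1$ by sandwiching $J\bigl(u(r\cdot)\bigr)=\tfrac{1}{2r^{2m}}\int_{\R^{2m}}H(u)-2G(u)\,dx$ between $c_\beta(\rho)$ and itself rather than via your explicit $J(v)=r^{-2m}c_\beta(\rho)$ contradiction, and it obtains the nonnegative minimizer by replacing the whole sequence $(u_n)$ with $(|u_n|)$ instead of replacing the limit $u$ with $|u|$.
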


\begin{proof}
Consider $(u_n) \subset \cD \cap \cM$ such that $\lim_n J(u_n) = c_\beta(\rho)$. From Lemmas \ref{lem:u_n-small} and \ref{lem:c-small} and \eqref{eq:beta},
\[
|\nabla^m u_n|_2^2 \le [u_n]_\mu^2 \le \frac{4}{2 - \eta C_4^4 \rho} \frac{\theta-2}{\theta-4} c_\beta(\rho) < 1.
\]
Then, from Lemma \ref{lem:cpt} with $\displaystyle \nu = \frac{4}{2 - \eta C_4^4 \rho} \frac{\theta-2}{\theta-4} c_\beta(\rho)$, there exists $u \in H^m(\R^{2m}) \cap \cD$ such that, up to a subsequence, $u_n \rightharpoonup u$ in $H^m(\R^{2m})$, $u_n(x) \to u(x)$ for a.e. $x \in \R^{2m}$, and $G(u_n) \to G(u)$ and $H(u_n) \to H(u)$ in $L^1(\R^{2m})$. From Corollary \ref{cor:bdda}, the compact embedding $H^m_\textup{rad}(\R^{2m}) \hookrightarrow L^4(\R^{2m})$, and because $(u_n) \in \cM$, there holds
\[
\int_{\R^{2m}} \frac{\eta}{2} u^4 + H(u) \, dx = \lim_n [u_n]_\mu^2 > 0,
\]
hence $u \ne 0$. Let $r = r(u)$ defined by \eqref{def:r}. Since
\[
[u]_\mu^2 \le \lim_n [u_n]_\mu^2 = \int_{\R^{2m}} \frac{\eta}{2} u^4 + H(u) \, dx,
\]
we have that $r \ge 1$, thus $u(r\cdot) \in \cD \cap \cM$ and, from Corollary \ref{cor+}, (\ref{A4}) and Fatou's Lemma,
\begin{align*}
0 < c_\beta(\rho) & \le J\bigl(u(r\cdot)\bigr) = \frac{1}{2r^{2m}} \int_{\R^{2m}} H(u) - 2G(u) \, dx \le \frac12 \int_{\R^{2m}} H(u) - 2 G(u) \, dx\\
& \le \lim_n \frac12 \int_{\R^{2m}} H(u_n) - 2 G(u_n) \, dx = \lim_n J(u_n) = c_\beta(\rho).
\end{align*}
This yields that $r = 1$, $u \in \cD \cap \cM$, and $J(u) = c_\beta(\rho)$. Finally, if $m=1$ and $g$ is odd, then $(u_n)$ can be replaced with $(|u_n|)$.
\end{proof}

For $u\in X^m \setminus\{0\}$, we define 
\begin{equation}\label{vp_u}
\vp_u(s) :=J(s^{m} u(s \cdot)),\quad s\in (0,+\infty),
\end{equation}
and we note the following, easy to check, scaling property.

\begin{Lem}\label{lem:scaling}
Fix $s > 0$ and $u \in X^m \setminus \{0\}$. Let $v := s^m u(s \cdot)$. Then
\begin{enumerate}
	\item $|v|_{2} = |u|_{2}$, 
	\item $[v]_\mu^2 = s^{2m} [u]_\mu^2$.
\end{enumerate}
\end{Lem}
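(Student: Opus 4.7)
This is a routine change-of-variables calculation; the plan is essentially to push the substitution $y = sx$ through both identities and collect the powers of $s$ carefully.

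For the first identity, I would simply write
\[
|v|_2^2 = \int_{\R^{2m}} s^{2m} u(sx)^2 \, dx
\]
and perform the substitution $y = sx$, under which $dx = s^{-2m} dy$. The factor $s^{2m}$ and the Jacobian $s^{-2m}$ cancel exactly, yielding $|v|_2^2 = |u|_2^2$.

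For the second identity, the main point is to observe how $\nabla^m$ scales. By the chain rule applied $m$ times, each spatial derivative of $v(x) = s^m u(sx)$ brings out one additional factor of $s$, so $\nabla^m v(x) = s^{2m} (\nabla^m u)(sx)$ (regardless of whether $m$ is even or odd, since the definition \eqref{def:nabla-m} uses the same number of derivatives in either case). Squaring, integrating, and substituting $y = sx$ gives
\[
\int_{\R^{2m}} |\nabla^m v(x)|^2 \, dx = s^{4m} \int_{\R^{2m}} |(\nabla^m u)(sx)|^2 \, dx = s^{4m} \cdot s^{-2m} |\nabla^m u|_2^2 = s^{2m} |\nabla^m u|_2^2.
\]
For the Hardy term, I would similarly compute
\[
\int_{\R^{2m}} \frac{\mu v(x)^2}{|x|^{2m}} \, dx = \int_{\R^{2m}} \frac{\mu s^{2m} u(sx)^2}{|x|^{2m}} \, dx,
\]
and under $y = sx$ the denominator becomes $|x|^{2m} = s^{-2m}|y|^{2m}$, producing an extra factor of $s^{2m}$, while the Jacobian again contributes $s^{-2m}$; the net effect is a single factor of $s^{2m}$. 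Adding the two contributions yields $[v]_\mu^2 = s^{2m} [u]_\mu^2$.

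There is no real obstacle here; the only thing worth checking is that the scaling exponent in the definition $v := s^m u(s\cdot)$ is precisely the one that keeps the $L^2$-mass invariant, and that the same exponent produces the clean $s^{2m}$ factor in front of both summands of $[u]_\mu^2$ (which is exactly why $|x|^{-2m}$ is the natural Hardy weight for the operator $(-\Delta)^m$ in dimension $2m$).
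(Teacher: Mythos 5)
Your computation is correct and is exactly the routine change-of-variables verification the paper has in mind; the paper itself omits the proof, labelling the lemma ``easy to check.'' All the exponent bookkeeping (the $s^{2m}$ Jacobian, the $m$ factors of $s$ from $\nabla^m$ on top of the $s^m$ prefactor, and the $s^{2m}$ gained from the Hardy weight $|x|^{-2m}$) is handled correctly.
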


\begin{Lem}\label{lem:noM0}
If (\ref{A0}), (\ref{A3}), and \eqref{eq:Heta} hold, then $\left\{u \in \cM : \vp_u''(1) = 0\right\} = \emptyset$, where $\vp_u$ is given by \eqref{vp_u}.
\end{Lem}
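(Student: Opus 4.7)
The plan is to compute $\vp_u''(1)$ explicitly for $u \in \cM$ and then invoke (\ref{A3}) via the integral characterization of $\preceq$ to rule out its vanishing. By Lemma \ref{lem:scaling} together with the change of variables $y=sx$ in the integrals containing $u^4$ and $G(\cdot)$, one obtains
\[
\vp_u(s) = \tfrac{s^{2m}}{2}[u]_\mu^2 - \tfrac{\eta}{4}s^{2m}|u|_4^4 - s^{-2m}\int_{\R^{2m}} G(s^m u)\,dx.
\]
Differentiating and using the identity $\int H(s^m u)\,dx = s^m\int g(s^m u)u\,dx - 2\int G(s^m u)\,dx$, I would show that
\[
\vp_u'(s) = \frac{m}{s}\Psi(s), \qquad \Psi(s) := s^{2m}[u]_\mu^2 - \tfrac{\eta}{2}s^{2m}|u|_4^4 - s^{-2m}\int_{\R^{2m}} H(s^m u)\,dx.
\]
In particular, $\vp_u'(1) = mM(u)$, consistent with the definition of $\cM$.

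Differentiating once more gives $\vp_u''(s) = -\frac{m}{s^2}\Psi(s) + \frac{m}{s}\Psi'(s)$. For $u \in \cM$ the first term vanishes at $s=1$ since $\Psi(1)=M(u)=0$, leaving $\vp_u''(1) = m\Psi'(1)$. A straightforward computation of $\Psi'$ yields
\[
\Psi'(1) = m\biggl(2[u]_\mu^2 - \eta|u|_4^4 + 2\int_{\R^{2m}} H(u)\,dx - \int_{\R^{2m}} h(u)u\,dx\biggr).
\]
Substituting the membership relation $[u]_\mu^2 = \tfrac{\eta}{2}|u|_4^4 + \int H(u)\,dx$ into $2[u]_\mu^2$ causes the $\eta$-term to cancel, giving the clean expression
\[
\vp_u''(1) = m^2 \int_{\R^{2m}} \bigl(4H(u) - h(u)u\bigr)\,dx.
\]

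Now (\ref{A3}) states $4H(s) \preceq h(s)s$ for $s\in\R\setminus\{0\}$. Assumption (\ref{A0}) implies $h(s) = \cO(|s|)$ as $s\to 0$, hence $H(s) = \cO(s^{2})$ as $s\to 0$; the exponential decay condition for both $4H$ and $h(\cdot)\cdot$ at infinity follows from (\ref{A0}) by integration (for $H$) and by the estimate $|s|e^{-(\alpha-\alpha')s^2}\to 0$ (for $h(s)s$). Thus both functions satisfy the growth hypotheses of the integral characterization of $\preceq$ stated at the end of Section \ref{sec:setting}, giving
\[
\int_{\R^{2m}} 4H(u)\,dx < \int_{\R^{2m}} h(u)u\,dx \quad\text{for every nontrivial } u \in H^m(\R^{2m}).
\]
Since every $u \in \cM$ is nontrivial by definition, this forces $\vp_u''(1) < 0$, in particular $\vp_u''(1)\ne 0$, so the set in question is empty.

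The main obstacle is the second-derivative bookkeeping: without the factorization $\vp_u'(s) = \frac{m}{s}\Psi(s)$ and the observation that $\Psi(1) = 0$ on $\cM$, the expression for $\vp_u''(1)$ contains ostensibly $g'$-terms and cross-terms that only cancel after one uses the $\cM$-constraint; routing through $\Psi$ makes the cancellation transparent and produces exactly the combination $4H(u)-h(u)u$ on which (\ref{A3}) acts.
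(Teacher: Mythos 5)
Your proof is correct and follows the same route as the paper's: both use $M(u)=\vp_u''(1)=0$ together with the scaling identities to reduce $\vp_u''(1)$ to $m^2\int_{\R^{2m}}4H(u)-h(u)u\,dx$, and then invoke the integral characterization of $\preceq$ from (\ref{A3}) with $u\ne 0$. The only difference is that you carry out explicitly the bookkeeping (the factorization $\vp_u'(s)=\frac{m}{s}\Psi(s)$ and the cancellation of the $\eta$-term via the $\cM$-constraint) that the paper leaves implicit.
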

\begin{proof}
If by contradiction there exists $u \in \cM$ with $\vp_u''(1) = 0$, then from the identities $M(u) = \vp_u''(1) = 0$ and Lemma \ref{lem:scaling} we obtain (recall from \eqref{def:M} the definition of $M$)
\[
\int_{\R^N} 4 H(u) - h(u)u \, dx = 0,
\]
in contrast with (\ref{A3}) because $u \ne 0$.
\end{proof}

\begin{proof}[Proof of Theorem \ref{th:main}]
From Lemma \ref{lem:c_0attained}, let $\tu \in \cD \cap \cM$ such that $J(u) = c_\beta(\rho)$; we recall that $u \ge 0$ if $m=1$ and $g$ is odd. Now we prove that, for every $v \in \cS \cap \cM$, the functional $\bigl(\Phi'(v),M'(v)\bigr) \colon X_\textup{rad}^m \to \R^2$ is surjective, where $\Phi(v) := |v|_2^2$ and $M$ is defined in \eqref{def:M}. As a matter of fact, consider the curve $(0,+\infty) \ni s \mapsto w(s) := s^m v(s \cdot) \in \cS$ (see Lemma \ref{lem:scaling}). Defining $\gamma(s) := M\bigl(w(s)\bigr) = s \varphi_v'(s)$, where $\varphi_v$ is given in \eqref{vp_u}, we see from Lemma \ref{lem:noM0} that $M'(v) [w'(1)] = \gamma'(1) \ne 0$, and $w'(1)$ belongs to the tangent space to $\cS$ at $v$. This implies that the system
\[
\begin{cases}
\Phi'(v) [a w'(1) + b v] = b \rho = y\\
M'(v) [a w'(1) + b v] = a M'(v) [w'(1)] + b M'(v) [v] = z
\end{cases}
\]
is solvable in $(a,b)$ for every $(y,z) \in \R^2$, whence the surjectivity. Then, from \cite[Proposition A.1]{MS}, there exist Lagrange multipliers $\nu\in\R$ and $\lambda \ge 0$ such that $\tu \in \cD \cap \cM$ solves
$$
(-\Delta)^m \tu + \mu \frac{\tu}{|x|^{2m}} - \eta \tu^3 - g(\tu)+\lambda \tu +\nu \left( (-\Delta)^m \tu+ \mu \frac{\tu}{|x|^{2m}} - \eta u^3 - \frac12 h(\tu)\right)=0,
$$
that is,
\begin{equation}\label{eq:mu}
(1+\nu) \left( (-\Delta)^m \tu + \mu \frac{\tu}{|x|^{2m}} \right) + \lambda\tu = \eta (1+\nu) \tu^3 + g(\tu) + \frac\nu2 h(\tu).
\end{equation}
If $\nu=-1$, then (\ref{A3}) and (\ref{A4}) yield
\begin{align*}
0 \leq \lambda \int_{\R^{2m}} \tu^2 \, dx & = \frac12 \int_{\R^{2m}} 2 g(\tu)\tu- h(\tu) \tu \, dx < \int_{\R^{2m}} 4 G(\tu)-  g(\tu) \tu \, dx\\
& \le (4 - \theta) \int_{\R^{2m}} G(u) \, dx \le 0,
\end{align*}
which is a contradiction. Consequently, $\tu$ satisfies the Nehari and Poho\v{z}aev identities related to equation \eqref{eq:mu}, whence
\[
(1+\nu) [\tu]_\mu^2 = (1+\nu) \frac{\eta}{2} |\tu|_4^4 + \int_{\R^{2m}} H(\tu) + \frac{\nu}{2} \bigl( h(\tu)\tu - 2 H(\tu) \bigr) \, dx.
\]
This and $\tu \in \cM$ imply
\[
\nu \int_{\R^N} h(\tu)\tu - 4 H(\tu) \, dx = 0,
\]
hence $\nu = 0$ from (\ref{A3}). Therefore, from \eqref{eq:mu}, we obtain that $\tu$ is a week solution to the differential equation in \eqref{eq}. Observe that $\lambda = 0$ if $\tu \in \cD \setminus \cS$, thus the proof is concluded once we have proved that $\lambda > 0$. Suppose by contradiction that $\lambda = 0$. Then $\tu$ satisfies
\[
(-\Delta)^m \tu+\frac{\mu}{|y|^{2m}} \tu = \eta \tu^3 + g(\tu),
\]
and so from Proposition \ref{poh:local} we obtain $\int_{\R^{2m}} \frac{\eta}{4} \tu^4 + G(\tu) \, dx = 0$, in contrast with $G \ge 0$. 
\end{proof}

\section*{Acknowledgements}

BB was partly supported by the National Science Centre, Poland (Grant no. \linebreak 2022/47/D/ST1/00487). OHM was supported in part by CNPq Proc. 303256/2022-2 and FAPESP Projeto Tem\'atico Proc. 2022/16407-1. JS is a member of GNAMPA (INdAM) and was partly supported by the GNAMPA project {\em Metodi variazionali e topologici per alcune equazione di Schr\"odinger nonlineari}. This work was partly supported by the Thematic Research Programme “Variational and geometrical methods in partial differential equations”, University of Warsaw, Excellence Initiative - Research University.

%

\end{document}